\newtheorem{theorem}{Theorem}[section]
\newtheorem{lemma}[theorem]{Lemma}
\newtheorem{remark}[theorem]{Remark}
\newtheorem{example}[theorem]{Example}
\newtheorem{definition}{Definition}[section]
\numberwithin{theorem}{section}
\numberwithin{equation}{section}
\numberwithin{figure}{section}
\colorlet{inlinkcolor}{green!50!black}
\colorlet{exlinkcolor}{red!50!black}
\newcommand{\He}[1]{{\left\Vert{\hskip -2.7pt}\left\vert #1 \right\vert{\hskip -2.7pt}\right\Vert}}
\newcommand{\al}{\alpha}
\newcommand{\be}{\beta}
\newcommand{\de}{\delta}
\newcommand{\ep}{\varepsilon}
\newcommand{\Ga}{\Gamma}
\newcommand{\na}{\nabla}
\newcommand{\om}{\omega}
\newcommand{\Om}{\Omega}
\newcommand{\pa}{\partial}
\newcommand{\ta}{\theta}
\newcommand{\vp}{\varphi}
\newcommand{\bq}{\mathbf{q}}
\newcommand{\bV}{\mathbf{V}}
\newcommand{\bH}{\boldsymbol{H}}
\newcommand{\bn}{\boldsymbol{n}}
\newcommand{\br}{\boldsymbol{r}}
\newcommand{\brh}{\boldsymbol{\rho}}
\newcommand{\bta}{\boldsymbol{\ta}}
\newcommand{\bvp}{\boldsymbol{\vp}}
\newcommand{\E}{\mathcal{E}}
\newcommand{\T}{\mathcal{T}}
\newcommand{\tu}{\tilde u_h}
\newcommand{\tw}{\tilde w_h}
\newcommand{\tvp}{\tilde\bvp_h}
\newcommand{\norm}[1]{\left\Vert#1\right\Vert}
\newcommand{\pd}[1]{\left\langle #1\right\rangle}
\newcommand{\set}[1]{\left\{#1\right\}}
\newcommand{\av}[1]{\left\{#1\right\}}
\newcommand{\Jm}[1]{\left[{\hskip -2.6pt}\left[#1\right]{\hskip -2.6pt}\right]}
\newcommand{\jm}[1]{[\hskip -1pt[#1]\hskip -1pt]}
\newcommand{\eq}[1]{\begin{align}#1\end{align}}
\newcommand{\eqn}[1]{\begin{align*}#1\end{align*}}
\newcommand{\nn}{\nonumber}
\title{Hybridizable Discontinuous Galerkin Methods for Helmholtz Equation with High Wave Number. Part I: Linear case}
\author{Bingxin Zhu\footnotemark[1] \and Haijun Wu
\thanks{Department of Mathematics, Nanjing University, Jiangsu, 210093, P.R. China ({\tt 867621163@qq.com}, {\tt hjw@nju.edu.cn}). This work was partially supported by the NSF of China under grants 11525103 and 91630309.}
}
\date{}  
\begin{document}
\maketitle
\begin{abstract}

   This paper addresses several aspects of the linear Hybridizable Discontinuous Galerkin  Method (HDG) for the Helmholtz equation with impedance boundary condition at high frequency.  First,  error estimates with explicit dependence on the wave number $k$ for the HDG approximations to the exact solution  $u$ and its negative  gradient $\bq=-\na u$ are derived. It is shown that $k\Vert u-u_h\Vert_{L^2(\Omega)}+ \Vert \bq-\bq_h\Vert_{L^2(\Omega)}=O(k^2h^2+k^4h^3)$ under the  conditions that $k^3h^2$ is sufficiently small and that the penalty parameter $\tau\eqsim k$, where $h$ is the mesh size. 
Note that the convergence order in $\bq_h$ is full and the pollution error is  $O(k^4h^3)$, which improve the existent results. Secondly, by using a standard postprocessing procedure from the HDG method for  elliptic problems,   a piecewise quadratic function $u_h^*$ is obtained  so that $k\Vert u-u_h^*\Vert_{L^2(\Omega)}=O(k^3h^3+k^4h^3)$. Note that the postprocessing procedure improves only the interpolation error (from $O(k^2h^2)$ to $O(k^3h^3)$) but leaves the pollution error $O(k^4h^3)$ unchanged.  Thirdly,  dispersion analyses and extensive numerical tests show that the pollution effect can be eliminated completely in 1D case and reduced greatly in 2D case by selecting appropriate penalty parameters.  The  preasymptotic error analysis of the higher order HDG method for the Helmholtz equation with high wave number is studied in Part II.
 \end{abstract}

{\bf Key words.} 
Hybridizable discontinuous Galerkin methods, preasymptotic error analysis, postprocessing, dispersion analysis, penalty parameter

{\bf AMS subject classifications. }
65N12, 
65N15, 
65N30, 
78A40  
 

\section{Introduction}\label{sec:Introduction}
In this paper, we consider preasymptotic error estimates of the hybridizable discontinuous Galerkin (HDG)  method for solving the Helmholtz equation with impedance boundary condition: 
\begin{align}
- \Delta u-k^2u &= f \quad \text{in }\Omega,\label{eq31} \\
\frac{\partial u} {\partial \bn}+\mathbf{i} k u &=g \quad \text{on }\Gamma, \label{eq32}
\end{align}
where $\Omega\in\mathbb{R}^d, d=1, 2, 3 $ is a convex polyhedral domain, $\Gamma:=\partial\Omega$, $k\gg 1$ is known as the wave number, $\mathbf{i} = \sqrt{-1}$ denotes the imaginary unit, and $\bn$ denotes the unit outward normal to $\partial \Omega$. 

The above Helmholtz problem is an approximation of the
acoustic scattering problem (with time dependence $e^{\i\om t}$) and the impedance boundary condition \eqref{eq32} can be regarded as the
lowest order approximation of the radiation condition (cf. \cite{em79}).
We remark that the Helmholtz problem \eqref{eq31}--\eqref{eq32} also arises in applications as a consequence of frequency domain treatment of attenuated scalar waves (cf. \cite{dss94}). 

The Helmholtz equation with large wave number is highly indefinite, which makes
the analysis of its discretizations very difficult. We refer to 
\cite[etc.]{ib95a, ihlenburg1997finite, ihlenburg98, Babuvska:lvoM:Sauter:2000, melenk2010convergence, Melenk:Sauter:2011, zhu2013preasymptotic, DuYu:WuHaijun:2014, WuHaijun:2014, Zhu:Burman:Wu:2012, FengXiaobing:WuHaijun:2009, FengXiaobing:WuHaijun:2011, mps13, FengXiaobing:XingYulong:2013,  Cessenat:Olivier:Despres:Bruno:1998, Amara:Mohamed:DjellouliRabia:Farhat:Charbel:2009, HiptmairR:MoiolaA:PerugiaI:2011, Monk:P:Wang:DaQing:1999, MelenkJensMarkus:1995, Engquist:Runborg:2003, ShenJie:WangLiLian:2007, Griesmaier:Monk:2011, Chen:Lu:Xu:2012, Cui:Zhang:2014, thompson06} 
for various discretization methods and their error analyses for  the Helmholtz equation with high wave number, including finite element methods (FEM), continuous interior penalty finite element methods (CIP-FEM), discontinuous Galerkin (DG) methods, ultra weak variational formulation, plane wave DG methods, spectral methods, HDG methods, and so on.

The HDG method was first proposed for the second order elliptic problem in mixed form \cite{Cockburn:2009, Cockburn:Bernardo:2010}, which  gives  simultaneously piecewise polynomial approximations of the original solution $u$, the negative flux $\bq$ (e.g. $\bq=-\na u$ for the Poisson equation), and their traces  on boundaries of mesh elements. Denote the HDG solutions by $u_h,\bq_h, \hat u_h$, and $\hat\bq_h$. They all converge in full-order, e.g., for the linear HDG method, it holds $\norm{u-u_h}_{L^2(\Om)}+\norm{\bq-\bq_h}_{L^2(\Om)}=O(h^2)$. Moreover the accuracies of the approximate solutions $u_h$ and $\bq_h$ can be enhanced by means of a  local postprocessing (see e.g. \cite{Cockburn:Bernardo:2010}). Another good property is that in the implementation the variables $u_h,\bq_h$, and $\hat\bq_h$ can be easily eliminated in a element-by-element fashion to give rise to a global system of equations involving only the numerical trace $\hat u_h$, and therefore, the HDG method possesses the flexibility for  approximation spaces of DG methods while avoids the drawback of large number of coupled unknowns of some other DG methods.

There have been several works on HDG methods for the Helmholtz equation with high wave number. Griesmaier and Monk \cite{Griesmaier:Monk:2011} prove the full-order convergence of the HDG method for the interior Dirichlet problem for the Helmholtz equation  under the conditions that $h$ is sufficiently small and the penalty parameter $\tau\eqsim1$, while not considering the dependence on the wave number $k$. Chen,  Lu, and Xu \cite{Chen:Lu:Xu:2012} derive $hp$ error estimates for the HDG method for the Helmholtz problem \eqref{eq31}--\eqref{eq32}. In particular, for the linear HDG method with  $\tau=\frac{1}{h}\mathbf{i}$, it is shown that $k\norm{u-u_h}_{L^2(\Om)}=O(k^2h^2+k^3h^2)$ and $\norm{\bq-\bq_h}_{L^2(\Om)}=O(kh+k^3h^2)$. Note that the convergence order of $\bq_h$ in $h$ is not full since $\bq_h$ is from the space of piecewise linear vector functions. The second term $O(k^3h^2)$ in each of the two error bounds is the so-called pollution error (see e.g. \cite{Babuvska:lvoM:Sauter:2000}), which dominates the first error bound for any $h>0$ and the second one if $k^2h\gtrsim 1$.  
Cui and Zhang \cite{Cui:Zhang:2014} prove error estimates for the HDG methods with pure imaginary penalty parameter $\tau$ but the error bounds contains terms of positive powers of $h^{-1}$, which are not optimal.

The purpose of this paper is to discuss several important aspects of the linear HDG which have been well understood for the elliptic problems but still not clear  for  the Helmholtz equation with high wave number. First, we derive the wave-number-explicit error estimates $k\Vert u-u_h\Vert_{L^2(\Omega)}+ \Vert \bq-\bq_h\Vert_{L^2(\Omega)}=O(k^2h^2+k^4h^3)$ under the  conditions that $k^3h^2$ is sufficiently small and that the penalty parameter $\tau\eqsim k$, which are full-order for both  $u_h$ and $\bq_h$ and the pollution error $O(k^4h^3)$ is better than those for the linear FEM \cite{WuHaijun:2014,DuYu:WuHaijun:2014} and the linear HDG method with $\tau\eqsim \frac{\mathbf{i}}{h}$ \cite{Chen:Lu:Xu:2012,Cui:Zhang:2014}. Secondly, we show that the standard postprocessing procedure from the HDG method for  elliptic problems produces  a piecewise quadratic function $u_h^*$ satisfying $k\Vert u-u_h^*\Vert_{L^2(\Omega)}=O(k^3h^3+k^4h^3)$. Note that the postprocessing procedure improves only the interpolation error (from $O(k^2h^2)$ to $O(k^3h^3)$) but leaves the pollution error $O(k^4h^3)$ unchanged.  Thirdly, we consider the selection of the penalty parameter in order to reduce the pollution effect. By dispersion analyses and extensive numerical tests, it is shown that the pollution effect can be eliminated completely in 1D case and reduced greatly in 2D case by selecting appropriate penalty parameters.  We would like to remark that the analyses are non-trivial. For example, in order to derive full order preasymptotic error estimates for both $u_h$ and $q_h$, we have used the modified duality argument and a special regularity estimate (see Remark~\ref{rerror}(b)) for the dual problem.  

The remainder of this paper is organized as follows. In \S \ref{HDG method},  we formulate the HDG method. In \S \ref{Elliptic projection}, we introduce some elliptic projections and derive theirs error estimates. In section \S \ref{Estimate}, we prove the preasymptotic error estimates of the HDG method for the Helmholtz problem. Then in  \S \ref{Postprocessing}, we apply the standard postprocessing  procedure to the HDG method for the Helmholtz problem and analyze the error of the postprocessing solution. In \S \ref{sec:Dispersion}, we carry out the dispersion analyses for the HDG method for the Helmholtz problems on one dimensional equidistant grids and two dimensional equilateral triangulations, respectively. In the last section, we present some numerical examples to verify our theoretical findings.

  Throughout the paper, $C$  is used to denote a generic positive constant which is independent of $h$, $k$, $f$, $g$ and the penalty parameters. We also use the shorthand notation $A \lesssim B$ and $B\lesssim A$ for the inequality $A \leq C B$ and $B\geq C A$. $A\eqsim B$ is a shorthand notation for the statement $A\lesssim B$ and $B\gtrsim  A$.  We assume that $k \gg 1$ since we are considering high-frequency problem.  

\section{HDG method}\label{HDG method}

In this section we recall the HDG method and introduce a variational formulation by eliminating the numerical traces, which will be used to derive error estimates. We first introduce some notation.  The standard space, norm, and inner product notation are adopted. Their definitions can be found in \cite{brenner2007mathematical,ciarlet2002finite}. In particular, $(\cdot,\cdot)$ and  $\norm{\cdot}$denotes the $L^2$-inner product  and $L^2$-norm on the complex-valued $L^2(\Om)$, respectively.
Let $\set{\mathcal{T}_h}$ be a family of regular and quasi-uniform triangulations of $\Omega$. Let $\E_h, \E_h^I$, and $\E_h^N$ be the set of all edges/faces of elements in $\T_h$, the inner edges/faces, and edges/faces on $\Ga$, respectively. For any $K\in\mathcal{T}_h$ and $e\in\E_h$, let $h_K:={\rm diam}\,(K)$ and $h_e:={\rm diam}\,(e)$. Denote by $h:=\max_{K\in\mathcal{T}_h} h_K$. Denote by $(\cdot,\cdot)$, $(\cdot,\cdot)_K$, and  $\pd{\cdot,\cdot}_e$ the $L^2$-inner product on $L^2(\Om)$, $L^2(K)$, and $L^2(e)$, respectively. For any $\mathcal{F}\subset\E_h$, let $\pd{\cdot,\cdot}_{\mathcal{F}}:=\sum_{e\in\mathcal{F}}\pd{\cdot,\cdot}_e$. Denote by $H^1(\T_h):=\prod_{K\in\T_h} H^1(K)$ and by $\bH^1(\T_h):=H^1(\T_h)^d$. 
For brevity, write $\norm{\cdot}:=\norm{\cdot}_{L^2(\Om)}$ and $\norm{\cdot}_{\pa\T_h}=\big(\sum_{K\in\T_h}\norm{\cdot}_{L^2(\pa K)}^2\big)^\frac12$.

\subsection{HDG formulation}
As usual, the HDG method designed by first rewritting \eqref{eq31}--\eqref{eq32} into the following first order system on $u$ and $\bq=-\nabla u$:
\begin{align}
\bq+\nabla u&=0\quad \text{in}\,\Omega,\label{eq33}\\
\quad\nabla\cdot\bq-k^2 u&=f\quad\text{in}\,\Omega,\label{eq34}\\
-\bq\cdot \bn+\mathbf{i} ku &=g\quad \text{on}\,\Gamma.\label{eq35}
\end{align}
Introduce the following approximation spaces for solving $u$, $\bq$, and the traces of $u$ on $\E_h$, respectively. 
\begin{align*}
V_h& :=\lbrace v_{h}:\; v_{h}|_{K}\in P_{1}(K),\forall  K\in\mathcal{T}_h\rbrace, \quad \bV_h:=(V_h)^d,\\
S_h& :=\lbrace \lambda_h:\; \lambda_{h}|_{e}\in P_1(e),\forall e\in\E_h\rbrace,
\end{align*}
where $P_1$ denotes the set of linear polynomials. Define the jump of a function $\vp$ on an edge/face $e\in\E_h$:
\eq{\label{Jm}
\Jm{\vp}:=
\begin{cases}
\vp|_{K_1}\cdot\bn_{K_1}+\vp|_{K_2}\cdot\bn_{K_2}&\text{if } e=K_1\cap K_2\in \E_h^I,\\
\vp\cdot\bn&\text{if } e\in \E_h^N.
\end{cases}
}
Note that $\Jm{\vp}$ is a vector if $\vp$ is scalar and vice versa.
Then the HDG method reads as \cite{Cockburn:2009, Cockburn:Bernardo:2010,Griesmaier:Monk:2011,Chen:Lu:Xu:2012}: Find $u_h\in V_h, \bq_h\in\bV_h, \hat{u}_h\in S_h$ such that
\begin{align}
(\bq_h,\br_h)_K& =(u_h,\nabla\cdot\br_h)_K-\int_{\partial K}^{}\hat{u}_h\bar\br_h\cdot \bn_K, \quad\forall\br_h\in\bV_h,\label{eq39}\\
(f, v_h)_K&=-(\bq_h,\nabla v_h)_K-k^2(u_h, v_h)_K+\int_{\partial K}^{}\hat{\bq}_h\cdot \bn_K \bar v_h, \quad\forall v_h\in V_h, \label{eq310}\\
\hat{\bq}_h&=\bq_h+\tau(u_h-\hat{u}_h)\bn_K \text{ on }\partial K,\quad \forall K\in\mathcal{T}_h,\label{eq311}\\
\Jm{\hat{\bq}_h}|_e&=0 \text{ if } e\in\mathcal{E}_h^{I};\quad(-\hat{\bq}_h\cdot \bn+\mathbf{i}k\hat{u}_h)|_e=g_h \text{ if } e\in \mathcal{E}_h^{N}.\label{eq312}
\end{align}
where $g_h$ is the $L^2$-projection of $g$ onto $S_h$ and $\tau$ is some penalty function defined on $\E_h$ which is assumed to be constant on each $e\in\E_h$. Recall that $\hat{\bq}_h$ and $\hat{u}_h$ are the so-called numerical trace, which are the approximations of $-\nabla u$ and $u$ on $\E_h$, respectively.

\subsection{A variational formulation on $u_h$ and $\bq_h$} In this subsection, for the purpose of theoretical analysis, we rewrite the HDG method into a variational  formulation on $u_h$ and $\bq_h$ by eliminating $\hat u_h$. 

Introduce the average of a function $v$ on $e\in\E_h$:
\eq{\label{av}
\av{v}:=
\begin{cases}
\frac12(v|_{K_1}+v|_{K_2})&\text{if } e=K_1\cap K_2\in \E_h^I,\\
v &\text{if } e\in \E_h^N.
\end{cases}
}
A direct calculation shows that the following ``magic formula" holds:
\begin{align}
\sum_{K\in\mathcal{T}_h}\int_{\partial K}v \bar{\br} \cdot \bn_K=\langle \Jm{v},\lbrace\br\rbrace\rangle_{\mathcal{E}_h^{I}}+\langle\lbrace v\rbrace,\Jm{\br}\rangle_{\mathcal{E}_h}.\label{eq36}
\end{align}
Denote by $\na_h$ the piecewise gradient operator, that is, $\na_h v|_K=\na(v|_K), \forall K\in\T_h$. From $\eqref{eq39}$, \eqref{eq310}, and  $\eqref{eq36}$, we conclude that
\eq{
(\bq_h,\br_h)  =(u_h,\nabla_h\cdot\br_h) 
 -\langle{\hat {u}_h,\Jm{\br_h}\rangle}_{\E_h},\label{eq313}
}
and
\begin{equation}
\begin{split}
(f, v_h) 
& =-(\bq_h,\nabla_h v_h) -k^2(u_h, v_h) +{\langle{\lbrace{\hat{\bq}_h}\rbrace,\Jm{v_h}}\rangle}_{\mathcal{E}_h^{I}}+{\langle{\Jm{\hat\bq_h},\lbrace v_h\rbrace}\rangle}_{\mathcal{E}_h^{N}},\label{eq317}
\end{split}
\end{equation}
From \eqref{eq311}--\eqref{eq312} and the definitions \eqref{Jm} and \eqref{av}, there hold on
 $e\in\mathcal{E}_h^{I}$, 
\eq{\Jm{\bq_h}+2\tau\big(\lbrace u_h\rbrace-\hat{u}_h\big)=0\quad\text{and}\quad \lbrace\hat\bq_h\rbrace=\lbrace\bq_h\rbrace+\frac{\tau}{2}\Jm{u_h},\label{eq314}
}
On $e\in\mathcal{E}_h^{N}$, we have $\mathbf{i}k\hat{u}_h-g_h=\hat\bq_h\cdot\bn=\Jm{\bq_h}+\tau \big(\lbrace u_h\rbrace-\hat{u}_h\big)$, and therefore
\begin{align}
\hat{u}_h&=\frac{1}{\tau+\mathbf{i}k}(\bq_h\cdot \bn+\tau u_h+g_h),\label{eq315}\\
\hat\bq_h\cdot \bn & =\frac{\mathbf{i}k}{\tau+\mathbf{i}k}(\bq_h\cdot \bn+\tau u_h)-\frac{\tau}{\tau+\mathbf{i}k}g_h.\label{eq319}
\end{align}
By substituting \eqref{eq314}--\eqref{eq319} into \eqref{eq313}--\eqref{eq317} we obtain
\begin{align}
(\bq_h,\br_h) =&(u_h,\nabla_h\cdot\br_h)-\Big\langle \frac{1}{2\tau}{\Jm{\bq_h}+\av{u_h},\Jm{\br_h}}\Big\rangle_{\mathcal{E}_h^{I}}\label{eq316}\\
&-\Big\langle\frac{1}{\tau+\mathbf{i}k}(\bq_h\cdot \bn
    +\tau u_h+ g_h),\Jm{\br_h}\Big\rangle_{{\mathcal{E}}_h^N},\nn\\
(f, v_h) =&-(\bq_h,\nabla_h v_h) -k^2(u_h, v_h) +\Big\langle{\lbrace\bq_h\rbrace+\frac{\tau}{2}\Jm{u_h},\Jm{v_h}}\Big\rangle_{\mathcal{E}_h^{I}}\label{eq320} \\
&+\Big\langle\frac{\mathbf{i}k}{\tau+\mathbf{i}k}(\bq_h\cdot \bn+\tau u_h),{\lbrace v_h\rbrace}\Big\rangle_{\mathcal{E}_h^{N}}-\Big\langle \frac{\tau}{\tau+\mathbf{i}k} g_h,{\lbrace v_h\rbrace}\Big\rangle_{\mathcal{E}_h^{N}}.\nn
\end{align}
Introduce the sesquilinear form on $\big(H^1(\T_h),\bH^1(\T_h)\big)\times\big(H^1(\T_h),\bH^1(\T_h)\big)$:
\begin{equation}\label{A}
\begin{split}
A(u,\bq ;v, \br):=&~(\bq, \br) -k^2(u, v) -(u,\nabla_h\cdot\br) -(\bq,\nabla_h v) +\Big\langle \frac{1}{2\tau}{\Jm{\bq}+\av{u},\Jm{\br}}\Big\rangle_{\mathcal{E}_h^{I}}\\
&+\Big\langle{\lbrace\bq\rbrace+\frac{\tau}{2}\Jm{u},\Jm{v}}\Big\rangle_{\mathcal{E}_h^{I}}+\Big\langle\frac{1}{\tau+\mathbf{i}k}(\bq\cdot \bn+\tau u),
 \br\cdot \bn-\mathbf{i}kv\Big\rangle_{\mathcal{E}_h^{N}},
\end{split}
\end{equation}

and define
\begin{align}
F(v, \br):=(f, v) -\Big\langle \frac{1}{\tau+\mathbf{i}k}g, \br\cdot\bn-\bar{\tau} v\Big\rangle_{\mathcal{E}_h^{N}}. \label{eq322}
\end{align}
From \eqref{eq316} and \eqref{eq320}, the HDG method is rewritten as: find $(u_h,\bq_h)\in (V_h,\bV_h)$, such that
\begin{align}
A(u_h,\bq_h; v_h,\br_h)=F(v_h,\br_h)\quad\forall \br_h\in\bV_h, v_h\in V_h.\label{eq350}
\end{align}

Noting from \eqref{eq36} that
\eqn{-(u,\na_h \cdot\br)&=(\na_h u,\br)-\pd{\Jm{u},\av{\br}}_{\E_h^I}-\pd{\av{u},\Jm{\br}}_{\E_h},\\
-(\bq,\na_h v)&=(\na_h\cdot\bq,v)-\pd{\av{\bq},\Jm{v}}_{\E_h^I}-\pd{\Jm{\bq},\av{v}}_{\E_h},}
we have another two equivalent forms of $A$ which are also useful in the analysis:
\begin{equation}
\begin{split}
A(u,\bq ;v, \br):=&~(\bq, \br) -k^2(u, v) +(\nabla_h u, \br) -(\bq,\nabla_h v) +\langle\lbrace\bq\rbrace,\Jm{v}\rangle_{\mathcal{E}_h^{I}}\\
&-\langle\Jm{u},\lbrace \br\rbrace\rangle_{\mathcal{E}_h^{I}}
+\Big\langle\frac{1}{2\tau}\Jm{\bq},\Jm{\br}\Big\rangle_{\mathcal{E}_h^{I}}+\Big\langle\frac{\tau}{2}\Jm{u},\Jm{v}\Big\rangle_{\mathcal{E}_h^{I}}+\mathbf{i}k\langle \lbrace u\rbrace,\lbrace v\rbrace\rangle_{\mathcal{E}_h^{N}}\\
&+\Big\langle\frac{1}{\tau+\mathbf{i}k}(\bq\cdot \bn-\mathbf{i}ku),
 \br\cdot \bn-\mathbf{i}kv\Big\rangle_{\mathcal{E}_h^{N}},\label{A2}
\end{split}
\end{equation}
and
\begin{equation}
\begin{split}
A(u,\bq ;v, \br):=&~(\bq, \br) -k^2(u, v) +(\nabla_h u, \br) +(\na_h\cdot\bq,v) +\Big\langle\Jm{\bq},\frac{1}{2\bar\tau}\Jm{\br}-\av{v}\Big\rangle_{\mathcal{E}_h^{I}}\\
&+\Big\langle\Jm{u},\frac{\bar\tau}{2}\Jm{v}-\lbrace \br\rbrace\Big\rangle_{\mathcal{E}_h^{I}}+\Big\langle\bq\cdot \bn-\mathbf{i}ku,
 \frac{1}{\bar\tau-\mathbf{i}k}(\br\cdot \bn-\bar\tau v)\Big\rangle_{\mathcal{E}_h^{N}},\label{A3}
\end{split}
\end{equation}

If $u$ is the solution to the Helmholtz problem\eqref{eq31}--\eqref{eq32} and $\bq=-\na u$, by using \eqref{A3} it easy to verify that
\eq{\label{VP1}
A(u,\bq;v,\br)=F(v,\br),\quad\forall v\in H^1(\T_h), \br\in\bH^1(\T_h).}
That is, the HDG formula \eqref{eq350} is consistent with the Helmholtz problem. As a consequence, we have the following Galerkin orthogonality.
\begin{align}
A(u-u_h,\bq-\bq_h; v_h,\br_h)=0\quad \forall ~v_h\in V_h,\br_h\in \bV_h.\label{eq3133}
\end{align}

For the ease of presentation, we assume that the penalty parameter $\tau$ is a positive constant in the error analysis since it can be easily extended to the case of a complex number with positive real and imaginary parts. 

 Using \eqref{A2}, we introduce the following norm on $\big(H^1(\T_h),\bH^1(\T_h)\big)$:
\begin{equation}
\begin{split}
\He{v,\br}^2:=&\Re \big(A(v, \br;v, \br)\big)+k^2(v,v)\\
= &~\norm{\br}^2 +\frac{1}{2\tau}\norm{\Jm{\br}}_{\mathcal{E}_h^{I}}^2+\frac{\tau}{2}\norm{\Jm{v}}_{\mathcal{E}_h^{I}}^2
+\frac{\tau}{\tau^2+k^2}\norm{\br\cdot \bn-\mathbf{i}kv}_{\mathcal{E}_h^{N}}^2.\label{norm}
\end{split}
\end{equation}

\section{Elliptic projections}\label{Elliptic projection}
In this section, we derive error estimates of some elliptic projections which will be used in the modified duality argument in the error analysis for the HDG methods for the Helmholtz problem.

We first recall the HDG projection introduced in \cite{Cockburn:Bernardo:2010}. Given a number $\be\neq 0$, $\Pi^\be(u,\bq)=(\Pi_1^\be u,\Pi_2^\be\bq)\in (V_h,\bV_h)$ is defined as follows: For any $K\in\T_h, e\subset\pa K$,  
\begin{align}
(\Pi_1^\beta u ,v_h)_K& =(u, v_h)_K\quad\forall\, v_h\in P_0,\label{P1}\\
(\Pi_2^\beta\bq,\br_h)_K& =(\bq,\br_h)_K\quad\forall\,\br_h\in(P_0)^d,\label{P2} \\
\langle{\Pi_2^\beta\bq\cdot \bn_K}+\beta\Pi_1^\beta u,\mu\rangle_e& =\langle{\bq\cdot \bn_K}+\beta u,\mu\rangle_e\quad\forall\,\mu\in P_1. \label{P3}
\end{align}
Here $P_0$ denotes the space of constant functions. We have the following estimates for the projection $\Pi^\beta$ \cite[Theorem 2.1]{Cockburn:Bernardo:2010}:
\begin{lemma}\label{lem:Pi}
 Let $1\leq s$,$t \leq 2$. Then
 \begin{align}
{\Vert {\Pi_1^{\beta} u-u}\Vert}_{L^2(K)}&\lesssim h_K^{s} \vert u\vert_{H^{s}(K)}+\vert\beta\vert^{-1}h_K^{t}\vert\nabla\cdot\bq\vert_{H^{t-1}(K)},\label{eq324}\\
{\Vert\Pi_2^{\beta}\bq-\bq\Vert}_{L^2(K)}& \lesssim\vert\beta\vert h_K^{s}\vert u\vert_{H^{s}(K)}+h_K^{t}\vert\bq\vert_{H^t(K)},\quad \forall K\in{\mathcal{T}_h}.\label{eq325}
\end{align}
\end{lemma}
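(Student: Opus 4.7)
The plan is to follow the proof of Theorem 2.1 in \cite{Cockburn:Bernardo:2010}, from which the lemma is quoted. The argument is entirely local, so I fix $K \in \T_h$ and write $w := \Pi_1^\beta u$ and $\boldsymbol{p} := \Pi_2^\beta \bq$ for brevity. I would first verify that the system \eqref{P1}--\eqref{P3} has a unique solution: the local trial space has dimension $(d+1) + d(d+1) = (d+1)^2$, matching the $1 + d + (d+1)\cdot d$ linear conditions provided by \eqref{P1}--\eqref{P3}. To establish injectivity, set $(u,\bq)=(0,0)$, take $\mu = w|_e$ in \eqref{P3}, and sum over $e\subset\pa K$ to obtain
\begin{align*}
\beta\sum_{e\subset\pa K}\norm{w}_{L^2(e)}^2 + \int_{\pa K}(\boldsymbol{p}\cdot\bn)\bar w\,ds = 0.
\end{align*}
Integration by parts turns the second term into $(\na\cdot\boldsymbol{p}, w)_K + (\boldsymbol{p}, \na w)_K$; the first summand vanishes because $\na\cdot\boldsymbol{p}\in P_0$ and \eqref{P1} forces $w$ to have zero mean, and the second vanishes because $\na w\in (P_0)^d$ and \eqref{P2} forces each component of $\boldsymbol{p}$ to have zero mean. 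Hence $w\equiv 0$ on every face, and since $w\in P_1(K)$ this gives $w\equiv 0$ on $K$. Plugging back into \eqref{P3} yields $\boldsymbol{p}\cdot\bn|_e = 0$ on every face $e$, and because these normal-trace functionals are exactly the unisolvent BDM$_1$ degrees of freedom on $(P_1(K))^d$, I conclude $\boldsymbol{p}=0$.

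An immediate consequence is the \textbf{reproduction property}: when $(u,\bq)\in(P_1(K),(P_1(K))^d)$, the pair $(u,\bq)$ itself satisfies \eqref{P1}--\eqref{P3}, so by uniqueness $\Pi^\beta(u,\bq)=(u,\bq)$ on this subspace.

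Next I would derive $\beta$-explicit stability estimates on the reference element $\hat K$, of the schematic form
\begin{align*}
\norm{w}_{L^2(\hat K)} &\lesssim \norm{u}_{L^2(\hat K)} + |\beta|^{-1}\norm{\na\cdot\bq}_{L^2(\hat K)},\\
\norm{\boldsymbol{p}}_{L^2(\hat K)} &\lesssim |\beta|\,\norm{u}_{L^2(\hat K)} + \norm{\bq}_{L^2(\hat K)} + \norm{\na\cdot\bq}_{L^2(\hat K)},
\end{align*}
using the same testing strategy as in the injectivity proof but with nonzero data on the right-hand side of \eqref{P3}. The $|\beta|^{\pm 1}$ factors arise from the dual roles of $\beta$ in \eqref{P3}: to bound $w$ I treat $\beta w$ as the leading face contribution and control $\boldsymbol{p}\cdot\bn$ through the same integration-by-parts identity that brings in $\na\cdot\bq$, producing $|\beta|^{-1}$; to bound $\boldsymbol{p}$ I rearrange \eqref{P3} to solve for $\boldsymbol{p}\cdot\bn$ with $\beta u$ on the right-hand side, producing $|\beta|$. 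These stability bounds combine with the reproduction property in the standard Bramble--Hilbert manner: I write $(u,\bq)-\Pi^\beta(u,\bq) = (I - \Pi^\beta)(u-\tilde u,\bq-\tilde\bq)$ for any polynomial pair $(\tilde u,\tilde\bq)\in(P_1,(P_1)^d)$, apply the stability bounds to the right-hand side, and take the infimum over such approximants via classical polynomial approximation theory. A scaling argument from $\hat K$ back to $K$ then supplies the correct powers of $h_K$ for each seminorm and yields \eqref{eq324}--\eqref{eq325}.

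The main obstacle is tracking the $\beta$-dependence through the Bramble--Hilbert step so that $|\na\cdot\bq|_{H^{t-1}(K)}$---rather than $|\bq|_{H^t(K)}$---appears next to $|\beta|^{-1}$ in \eqref{eq324}. This is handled by treating $\na\cdot\bq$ as an independent datum throughout the stability analysis and the polynomial-approximation step rather than absorbing it into a seminorm of $\bq$; doing so preserves the finer estimate and reflects the fact that the divergence scales one derivative order better than a generic first derivative.
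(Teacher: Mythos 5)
The paper does not actually prove this lemma---it quotes it verbatim from \cite[Theorem 2.1]{Cockburn:Bernardo:2010}---so your proposal can only be measured against that source. Your outline is essentially the argument given there, and its main structural steps are sound: the dimension count, the injectivity argument (testing \eqref{P3} with $\mu=w|_e$, integrating by parts, killing the volume terms via \eqref{P1}--\eqref{P2}, then invoking ${\rm BDM}_1$ unisolvence to get $\boldsymbol{p}=0$), and the polynomial reproduction property are all correct.

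The one place the plan would fail if executed literally is the pair of ``schematic'' stability bounds: an estimate of $\norm{w}_{L^2(\hat K)}$ by $\norm{u}_{L^2(\hat K)}+|\beta|^{-1}\norm{\na\cdot\bq}_{L^2(\hat K)}$ cannot hold, because the data entering \eqref{P3} are the traces $u|_e$ and $\bq\cdot\bn|_e$, which are not controlled by element $L^2$ norms. Two repairs are needed. First, when you test \eqref{P3} with $\mu=w|_e$ in the inhomogeneous case, the boundary term $\pd{\boldsymbol{p}\cdot\bn,w}_{\pa K}$ integrates by parts to $(\na_h\cdot\boldsymbol{p},u)_K$ plus a term that cancels with the data side; unlike in the injectivity argument this is not zero, and $\na\cdot\boldsymbol{p}$ is itself an unknown that must be pinned down by testing \eqref{P3} with constants, which introduces an extra $\beta\pd{u-w,1}_{\pa K}$ contribution to absorb. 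Second, the surviving data terms are $\pd{u,w}_{\pa K}$ and $\big((I-Q_0)\na\cdot\bq,w\big)_K$, so the correct right-hand side involves $\norm{u}_{L^2(\pa \hat K)}$ (harmless, since $s\ge 1$ lets you pass to $\norm{u}_{H^1(\hat K)}$ before Bramble--Hilbert) together with the constant-projection oscillation of $\na\cdot\bq$, which is precisely what produces $|\na\cdot\bq|_{H^{t-1}}$ in \eqref{eq324}. The cited proof sidesteps the stability-plus-Bramble--Hilbert packaging by estimating $\delta^u:=\Pi_1^\beta u-Qu$ and $\delta^{\bq}:=\Pi_2^\beta\bq-\boldsymbol{Q}\bq$ (with $Q,\boldsymbol{Q}$ the elementwise $L^2$ projections onto $P_1$ and $(P_1)^d$) directly, testing \eqref{P3} with $\delta^u$ and with $\delta^{\bq}\cdot\bn$; this converts the $\bq\cdot\bn$ trace into $\na\cdot\bq$ paired with a polynomial automatically and makes the $\beta$ bookkeeping transparent. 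Your route can be made to work, but only after these adjustments.
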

For simplicity, denote by $\Pi:=\Pi^\tau$ and by $\Pi^*:=\Pi^{-\tau}$.
We introduce the following definite sesquilinear form by removing the indefinite term $-k^2(u,v)$ in the sesquilinear form $A$ for the Helmholtz problem (see \eqref{A} or \eqref{A2}):
\eq{\label{hA}
\hat A(u,\bq ;v, \br):=A(u,\bq ;v, \br)+k^2(u,v).}
Given $w\in H^2(\Om)$ and $\bvp=\na w$, their elliptic projections $\tw\in V_h$ and $\tvp\in\bV_h$ are define by
\eq{\label{EP}
\hat A(v_h,\br_h; \tw,\tvp)=\hat A(v_h,\br_h; w,\bvp),\quad\forall v_h\in V_h, \br_h\in\bV_h.}
Note that $\tw$ and $\tvp$ are the HDG approximations of the following elliptic problem:
\eqn{
\bvp-\nabla w& =0,\quad-\nabla\cdot\bvp =\tilde F, \quad\text{in} \ \Omega,\\
\bvp\cdot \bn-\mathbf{i}k w& =\tilde g\quad \text{on} \ \Gamma,}
for some functions $\tilde F$ and $\tilde g$. As usual, decompose the errors as:
\begin{align*}
w-\tw&=w-\Pi_1^*w-(\tw-\Pi_1^*w)=:\tilde{\xi}-\tilde\om_h,\\
\bvp-\tvp&=\bvp-\Pi_2^*\bvp-(\tvp-\Pi_2^*\bvp)=:\tilde{\brh}-\tilde{\bta}_h. 
\end{align*}
The following theorem gives error estimates of the elliptic projections.
\begin{theorem}\label{theo41} For any $w\in H^2(\Om)$ and $\bvp=\na w$, let $\tw$ and $\tvp$ be the elliptic projections defined in \eqref{EP}. Then
\begin{align}
\norm{\bvp-\tvp} &\lesssim \norm{\bvp-\Pi_2^*\bvp},\label{eqEP1}\\
\norm{w-\tw} &\lesssim \Vert w-\Pi_1^*w\Vert+(h+\tau h^2)\norm{\bvp-\Pi_2^*\bvp} \label{eqEP2}.
\end{align}
\end{theorem}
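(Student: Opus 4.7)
The plan is a standard HDG elliptic-projection analysis adapted for the non-Hermitian form $\hat A$ of \eqref{hA}: first derive an energy bound on $(\tilde\om_h,\tilde{\bta}_h)$ to get \eqref{eqEP1}, then upgrade to an $L^2$-bound on $\tilde\om_h$ via adjoint duality to get \eqref{eqEP2}.

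\textbf{Energy estimate for $\bvp-\tvp$.} From \eqref{EP} one has the Galerkin orthogonality $\hat A(v_h,\br_h;\tilde\om_h,\tilde{\bta}_h)=\hat A(v_h,\br_h;\tilde{\xi},\tilde{\brh})$ for all $(v_h,\br_h)\in V_h\times\bV_h$. Testing with $(\tilde\om_h,\tilde{\bta}_h)$ and taking real parts gives, via \eqref{norm} and \eqref{hA}, the identity $\He{\tilde\om_h,\tilde{\bta}_h}^2=\Re\,\hat A(\tilde\om_h,\tilde{\bta}_h;\tilde{\xi},\tilde{\brh})$. Expanding the right side with \eqref{A3}, the defining properties \eqref{P1}--\eqref{P3} of $\Pi^*=\Pi^{-\tau}$ collapse almost everything: the volume terms $(\na_h\tilde\om_h,\tilde{\brh})$ and $(\na_h\!\cdot\!\tilde{\bta}_h,\tilde{\xi})$ vanish elementwise by $P_0$-orthogonality; the Neumann pairing vanishes because $\tilde{\bta}_h\!\cdot\!\bn-\mathbf{i}k\tilde\om_h\in P_1(e)$ while $\tilde{\brh}\!\cdot\!\bn-\tau\tilde{\xi}\perp P_1(e)$ by \eqref{P3}; and both interior jump terms vanish after unpacking \eqref{P3} on each side of an interior edge and observing that $\Jm{\tilde{\bta}_h}$ and $\Jm{\tilde\om_h}\cdot\bn$ belong to $P_1(e)$. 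Only $(\tilde{\bta}_h,\tilde{\brh})$ survives, so Cauchy--Schwarz combined with $\|\tilde{\bta}_h\|\leq\He{\tilde\om_h,\tilde{\bta}_h}$ (from \eqref{norm}) gives $\He{\tilde\om_h,\tilde{\bta}_h}\lesssim\|\tilde{\brh}\|$, and \eqref{eqEP1} follows by the triangle inequality.

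\textbf{Duality for $\|w-\tw\|$.} The triple-norm does not control $\|\tilde\om_h\|$, so I would use an adjoint duality argument. Let $(\phi,\bpsi)$ solve $\hat A(\phi,\bpsi;v,\br)=(v,\tilde\om_h)$ for all $(v,\br)$---an elliptic dual problem whose $H^2$-regularity on the convex $\Om$ yields $\|\phi\|_{H^2}+\|\bpsi\|_{H^1}\lesssim\|\tilde\om_h\|$. Choosing $v=\tilde\om_h,\br=\tilde{\bta}_h$ and applying the Galerkin orthogonality from \eqref{EP} with $(v_h,\br_h)=(\Pi_1^\tau\phi,\Pi_2^\tau\bpsi)$,
\[
\|\tilde\om_h\|^2=\hat A(\phi-\Pi_1^\tau\phi,\bpsi-\Pi_2^\tau\bpsi;\tilde\om_h,\tilde{\bta}_h)+\hat A(\Pi_1^\tau\phi,\Pi_2^\tau\bpsi;\tilde{\xi},\tilde{\brh}).
\]
The first summand is bounded by $\He{\tilde\om_h,\tilde{\bta}_h}$ times the interpolation errors of $(\phi,\bpsi)$ measured in suitable $L^2$ and trace norms; by Lemma~\ref{lem:Pi} (applied with $\be=\tau$ at the appropriate $(s,t)\in[1,2]^2$ for each contribution) these errors are shown to be of order $(h+\tau h^2)\|\tilde\om_h\|$, and combining with $\He{\tilde\om_h,\tilde{\bta}_h}\lesssim\|\tilde{\brh}\|$ from Step~1 yields the $(h+\tau h^2)\|\tilde{\brh}\|$ part of the bound. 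The second summand is again dramatically simplified by the projection properties of $(\tilde{\xi},\tilde{\brh})$ via \eqref{P1}--\eqref{P3}---exactly the same cancellations as in Step 1, but with $(\Pi_1^\tau\phi,\Pi_2^\tau\bpsi)$ in the first slot---reducing to controllable pairings that, together with the stability of $\Pi^\tau$, produce the $\|\tilde{\xi}\|=\|w-\Pi_1^*w\|$ term. Since $\|w-\tw\|\le\|\tilde{\xi}\|+\|\tilde\om_h\|$, \eqref{eqEP2} follows.

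\textbf{Main obstacle.} The crux is the duality step, for two reasons: (a) the dual must be posed with $(\phi,\bpsi)$ in the \emph{first} slot of $\hat A$ so that the orthogonality from \eqref{EP}---which is stated with discrete functions in the first slot---can be invoked by subtracting $\hat A(\Pi_1^\tau\phi,\Pi_2^\tau\bpsi;\tilde\om_h,\tilde{\bta}_h)$; (b) landing exactly at the factor $h+\tau h^2$ (rather than something like $(1+\tau)h$) requires careful bookkeeping of how the $\tau^{\pm1}$-weighted interpolation bounds in Lemma~\ref{lem:Pi} interact with the $\tau^{\pm 1/2}$-weighted jump/trace norms in $\He{\cdot,\cdot}$, using local trace and inverse inequalities together with the $H^2$-regularity of the dual solution.
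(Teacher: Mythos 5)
Your overall architecture is the paper's: an energy estimate in the triple norm using the orthogonalities of $\Pi^{-\tau}$ for \eqref{eqEP1}, then a duality argument with the dual pair in the first slot of $\hat A$ and projected by $\Pi^{\tau}$ for \eqref{eqEP2}; your Step 1 is essentially identical to the paper's. The genuine gap is in your treatment of the first summand $\hat A(U-\Pi_1 U,\mathbf{Q}-\Pi_2\mathbf{Q};\tilde\om_h,\tilde\bta_h)$ (writing $(U,\mathbf{Q})$ for your dual pair and $\hat{\xi}:=U-\Pi_1U$, $\hat{\brh}:=\mathbf{Q}-\Pi_2\mathbf{Q}$). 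Bounding it by continuity against $\He{\tilde\om_h,\tilde\bta_h}$ times interpolation/trace errors of the dual solution cannot reach the factor $h+\tau h^2$: in the form \eqref{A2} the interior term $\pd{\av{\hat{\brh}},\Jm{\tilde\om_h}}_{\E_h^I}$ must be paired with the $\tau^{1/2}\norm{\Jm{\tilde\om_h}}_{\E_h^I}$ piece of the triple norm, and since a trace estimate plus Lemma \ref{lem:Pi} gives $\norm{\av{\hat{\brh}}}_{\E_h^I}\lesssim(\tau h^{3/2}+h^{1/2})\norm{\tilde\om_h}$, this route yields a contribution $\tau^{-1/2}h^{1/2}\norm{\tilde\om_h}\norm{\tilde\brh}$, which is much larger than $(h+\tau h^2)\norm{\tilde\om_h}\norm{\tilde\brh}$ whenever $\tau h\lesssim 1$ --- exactly the regime $\tau\eqsim k$, $kh$ small in which the theorem is applied. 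The missing idea is that the same collapse you use in Step 1 must be applied on the dual side as well: the properties \eqref{P1}--\eqref{P3} with $\be=\tau$ give the exact identity $\hat A(\hat{\xi},\hat{\brh};v_h,\br_h)=(\hat{\brh},\br_h)$ for all discrete $(v_h,\br_h)$ (the offending average term cancels against $\tfrac{\tau}{2}\pd{\Jm{\hat{\xi}},\Jm{\tilde\om_h}}$ via $\pd{2\av{\hat{\brh}}+\tau\jm{\hat{\xi}},\mu\bn_e}_e=0$), so the first summand is exactly $(\hat{\brh},\tilde\bta_h)$ and is bounded by $(\tau h^2\vert U\vert_{H^2}+h\vert\mathbf{Q}\vert_{H^1})\norm{\tilde\brh}$ using \eqref{eq46}.

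Two smaller inaccuracies in the second summand. First, by the identity from Step 1 it equals $(\Pi_2\mathbf{Q},\tilde\brh)$, and ``stability of $\Pi^\tau$'' alone only gives $O(1)\norm{\tilde\om_h}\norm{\tilde\brh}$, which is useless; you must write $\Pi_2\mathbf{Q}=-\na U-\hat{\brh}$ and exploit the elementwise orthogonality $\tilde\brh\perp(P_0)^d$ to replace $\na U$ by $\na(U-I_hU)$ --- that insertion is what gains the factor $h$. Second, the term $\norm{w-\Pi_1^*w}=\norm{\tilde{\xi}}$ in \eqref{eqEP2} does not come out of the duality identity at all; it enters only through the final triangle inequality $\norm{w-\tw}\le\norm{\tilde{\xi}}+\norm{\tilde\om_h}$, the duality step showing $\norm{\tilde\om_h}\lesssim(h+\tau h^2)\norm{\tilde\brh}$. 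With these repairs your plan reproduces the paper's proof.
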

\begin{proof} From the definition of $\Pi^*=\Pi^{-\tau}$ (see \eqref{P1}--\eqref{P3} with $\be=-\tau$), we conclude that
\eq{\label{tPi}
\begin{split}
(\tilde\xi, v_h)_K=0, &\quad\forall\, v_h\in P_0,\\
(\tilde\brh,\br_h)_K =0,&\quad\forall\,\br_h\in(P_0)^d, \quad\forall K\in\T_h,\\
\langle\Jm{\tilde\brh}-2\tau \{\tilde\xi\},\mu\rangle_e=0,&\quad \langle2\av{\tilde\brh}-\tau \jm{\tilde\xi},\mu\bn_e\rangle_e=0,\quad\forall\,\mu\in P_1,  e\in\E_h^I, \\
\langle\tilde\brh\cdot\bn-\tau \tilde\xi,\mu\rangle_e=0, &\quad\forall\,\mu\in P_1,  e\in\E_h^N. 
\end{split}
}
Therefore, it follows from \eqref{EP}, \eqref{hA}, and \eqref{A3} that 
\eq{\label{eq45}
\hat{A}(v_h,\br_h;\tilde\om_h,\tilde\bta_h)=&\hat{A}(v_h,\br_h;\tilde{\xi},\tilde{\brh})=(\br_h, \tilde\brh),\quad\forall v_h\in V_h,\br_h\in\bV_h.}
By taking $v_h=\tilde\om_h$ and $\br_h=\tilde\bta_h$ in \eqref{eq45} and using \eqref{norm}, we conclude that
\begin{align*}
\He{\tilde\om_h,\tilde{\bta}_h}^2=\Re\big(\hat{A}(\tilde\om_h,\tilde{\bta}_h;\tilde\om_h,\tilde{\bta}_h)\big)=\Re\big((\tilde{\bta}_h,\tilde{\brh}) \big)\le\|\tilde{\brh}\|\|\tilde{\bta}_h\| .
\end{align*}
and hence
\begin{align}
\Vert\tilde{\bta}_h\Vert\lesssim\Vert\tilde{\brh}\Vert,\label{eq46}
\end{align}
which implies \eqref{eqEP1}.

Next we turn to prove \eqref{eqEP2} by using the duality argument. Introduce the dual problem
\begin{align}
\mathbf{Q}+\nabla U& =0,\quad \nabla\cdot\mathbf{Q} =\tilde\om_h,\quad\text{in } \, \Omega,\label{eq47}\\
\mathbf{Q}\cdot \bn+\mathbf{i}kU& =0\quad \text{on } \, \Gamma.\label{eq49}
\end{align}
The regularity theory of elliptic equations says that
\begin{align}\label{hatreg} 
\Vert  U\Vert_{H^2(\Omega)}+\Vert\mathbf{Q}\Vert_{H^1(\Omega)}\lesssim\norm{\tilde\om_h}.
\end{align}
Similar to \eqref{VP1} we have
\eq{\label{VP2}
\hat A(U,\mathbf{Q};v,\br)=(\tilde\om_h, v),\quad\forall v\in H^1(\T_h), \br\in\bH^1(\T_h).}

Denote by 
$$\hat{\xi}:=U-\Pi_1 U,\quad \hat{\brh}:=\mathbf{Q}-\Pi_2\mathbf{Q}.$$
Similar to \eqref{tPi}, from \eqref{P1}--\eqref{P3} with $\be=\tau$
, we have 
\eq{\label{hPi}
\begin{split}
(\hat{\xi}, v_h)_K=0, &\quad\forall\, v_h\in P_0,\\
(\hat{\brh},\br_h)_K =0,&\quad\forall\,\br_h\in(P_0)^d, \quad\forall K\in\T_h,\\
\langle\Jm{\hat{\brh}}+2\tau \{\hat{\xi}\},\mu\rangle_e=0,&\quad \langle2\av{\hat{\brh}}+\tau \jm{\hat{\xi}},\mu\bn_e\rangle_e=0,\quad\forall\,\mu\in P_1,  e\in\E_h^I, \\
\langle\hat{\brh}\cdot\bn+\tau \hat{\xi},\mu\rangle_e=0, &\quad\forall\,\mu\in P_1,  e\in\E_h^N. 
\end{split}}
It follows from \eqref{hA} and \eqref{A} that 

\eq{\label{eq45a}
\hat{A}(\hat{\xi},\hat{\brh};v_h,\br_h)=(\hat{\brh},\br_h),\quad\forall v_h\in V_h,\br_h\in\bV_h.}
By taking $v_h=\tilde\om_h, \br_h=\tilde\bta_h$ in \eqref{VP2} and using \eqref{eq45}, \eqref{eq45a}, and \eqref{tPi}, we conclude that
\eqn{
\norm{\tilde\om_h}^2&=\hat{A}(U,\mathbf{Q};\tilde\om_h,\tilde\bta_h)=\hat{A}(\Pi_1U,\Pi_2\mathbf{Q};\tilde\om_h,\tilde\bta_h)+\hat{A}(\hat{\xi},\hat{\brh};\tilde\om_h,\tilde\bta_h)\\
&=(\Pi_2\mathbf{Q}, \tilde\brh)+(\hat\brh,\tilde\bta_h)\\
&=-(\na U, \tilde\brh)-(\hat\brh,\tilde\brh)+(\hat\brh,\tilde\bta_h)\\
&=(\na(I_hU -U), \tilde\brh)-(\hat\brh,\tilde\brh)+(\hat\brh,\tilde\bta_h),}
where $I_h$ is the finite element interpolation operator. Therefore, from Lemma~\ref{lem:Pi}, \eqref{hatreg}, and \eqref{eq46} we have
\eqn{
\norm{\tilde\om_h}^2\lesssim& h\norm{\tilde\brh}|U|_{H^2(\Om)}
+\big(\tau h^2\vert U\vert_{H^2(\Om)}+h\vert\mathbf{Q}\vert_{H^1(\Om)}\big)\norm{\tilde\brh}\\
\lesssim& (\tau h^2+h)\norm{\tilde\om_h}\norm{\tilde\brh},}
which implies that \eqref{eqEP2} holds. This completes the proof of the theorem.
\end{proof}

\section{Error estimates for the HDG method}\label{Estimate}
In this section, we derive preasymptotic error estimates of the HDG solutions for the Helmholtz problem. 

We first recall wave-number-explicit stability and regularity estimate for the Helmholtz problem \eqref{eq31}--\eqref{eq32} (see \cite{Melenk:J:1995,Hetman:Ulrich:2007,Cummings:Peter:Feng:2006}).
\begin{lemma}\label{lemmareg}  Let $u$ be the solution to \eqref{eq31}--\eqref{eq32}. Then
\begin{align*}
k\Vert u\Vert+\Vert u\Vert_{H^1(\Omega)}&\lesssim \Vert f\Vert+\Vert g\Vert_{L^2(\Gamma)},\\
\Vert u\Vert_{H^2(\Omega)}&\lesssim k\big(\Vert f\Vert+\Vert g\Vert_{L^2(\Gamma)}+k^{-1}\Vert g\Vert_{H^{1/2}(\Gamma)}\big).
\end{align*}
\end{lemma}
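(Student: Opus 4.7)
The plan is to derive both bounds by combining a Rellich-type multiplier argument with standard elliptic regularity for convex domains, essentially reproducing the Melenk--Hetmaniuk approach cited.

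For the first (energy) estimate, I would begin by testing \eqref{eq31} against $\bar u$, integrating by parts over $\Om$, and eliminating the normal derivative via \eqref{eq32}, obtaining the identity
\[\norm{\na u}^2 - k^2\norm{u}^2 + \mathbf{i} k\norm{u}_{L^2(\Ga)}^2 = (f,u) + \pd{g,u}_\Ga.\]
Its imaginary part yields the trace bound $k\norm{u}_{L^2(\Ga)}^2 \lesssim \norm{f}\norm{u} + \norm{g}_{L^2(\Ga)}\norm{u}_{L^2(\Ga)}$. To upgrade this to control of $k\norm{u}$, I would employ the Rellich multiplier $\br\cdot\na\bar u$ with $\br(x)=x-x_0$ and integrate \eqref{eq31} against $2\,\Re(\br\cdot\na\bar u)$. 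The resulting identity couples $\norm{\na u}^2$, $k^2\norm{u}^2$, and boundary integrals of the form $\int_\Ga(\br\cdot\bn)|\na u|^2$ and $\int_\Ga(\br\cdot\bn)|u|^2$. Using \eqref{eq32} to rewrite the normal component of $\na u$ in terms of $g$ and $ku$, choosing $x_0$ so that $\br\cdot\bn\ge 0$ on $\Ga$ (possible by convexity of $\Om$), and inserting the trace bound just obtained, I expect to close the argument and obtain $k\norm{u}+\norm{\na u}\lesssim\norm{f}+\norm{g}_{L^2(\Ga)}$.

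For the second estimate I would recast the Helmholtz problem as a Neumann problem for the Laplacian, namely $-\De u = k^2 u + f$ in $\Om$ with $\pa u/\pa\bn = g-\mathbf{i} k u$ on $\Ga$, and invoke the $H^2$-regularity shift for the Neumann Laplacian on a convex polyhedron,
\[\norm{u}_{H^2(\Om)} \lesssim \norm{k^2 u + f} + \norm{g-\mathbf{i} k u}_{H^{1/2}(\Ga)} + \norm{u}_{H^1(\Om)}.\]
Applying the trace inequality $\norm{ku}_{H^{1/2}(\Ga)}\lesssim k\norm{u}_{H^1(\Om)}$ and plugging in the first estimate to control the $L^2$ and $H^1$ contributions by $\norm{f}+\norm{g}_{L^2(\Ga)}$ (with the appropriate power of $k$) yields the stated $H^2$ bound.

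The main obstacle is the Rellich step: tracking the algebra of the boundary terms so that the tangential piece cancels (or can be absorbed) and the normal piece is controlled via the impedance condition together with the $k\norm{u}_{L^2(\Ga)}$ bound is where the $k$-explicit dependence is earned, and the sign condition $\br\cdot\bn\ge 0$ is essential. A secondary technical issue is that convex polyhedra are non-smooth, so the multiplier identity has to be justified by density from smoother convex approximations of $\Om$; once this is done, the remaining bookkeeping of $k$-powers is routine.
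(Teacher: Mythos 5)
Your argument is correct and is essentially the standard one: the paper itself gives no proof of this lemma, citing Melenk's thesis, Hetmaniuk, and Cummings--Feng, all of which establish these bounds by exactly your route (energy identity plus imaginary part for the trace term, Rellich multiplier $\br\cdot\na\bar u$ with star-shapedness for the $k\norm{u}$ bound, then the $H^2$ shift for the Neumann Laplacian on a convex domain). Your bookkeeping of the $k$-powers in the second estimate also reproduces the stated bound exactly, so nothing further is needed beyond the technical care you already flag at the Rellich step.
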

\begin{remark} If $\Om$ is strictly star-shaped and sufficiently smooth, say $C^{2,1}$, then 
\eqn{\Vert u\Vert_{H^3(\Omega)}\lesssim k^2\big(\Vert f\Vert+\Vert g\Vert_{L^2(\Gamma)}+k^{-1}\Vert g\Vert_{H^{1/2}(\Gamma)}+k^{-2}(\norm{f}_{H^1(\Om)}+\Vert g\Vert_{H^{3/2}(\Gamma)})\big).}
In the following, we denote by $C_u:=k^{-1}|u|_{H^2(\Om)}+k^{-2}|u|_{H^3(\Om)}$ which is expect to be independent of the wave number $k$.
\end{remark}

 Similar to the error estimates for elliptic projections, we decompose the error as follows:
\begin{align*}
u-u_h &=(u-\Pi_1u)-(u_h-\Pi_1u)=:\xi-\om_h,\\
\bq-\bq_h &=(\bq-\Pi_2\bq)-(\bq_h-\Pi_2\bq)=:\brh-\bta_h.
\end{align*}
As \eqref{tPi}, from the definition \eqref{P1}--\eqref{P3} with $\be=\tau$, we have 
\eq{\label{Pi}
\begin{split}
(\xi, v_h)_K=0, &\quad\forall\, v_h\in P_0,\\
(\brh,\br_h)_K =0,&\quad\forall\,\br_h\in(P_0)^d, \quad\forall K\in\T_h,\\
\langle\Jm{\brh}+2\tau \{\xi\},\mu\rangle_e=0,&\quad \langle2\av{\brh}+\tau\jm{\xi},\mu\bn_e\rangle_e=0,\quad\forall\,\mu\in P_1,  e\in\E_h^I, \\
\langle\brh\cdot\bn+\tau \xi,\mu\rangle_e=0, &\quad\forall\,\mu\in P_1,  e\in\E_h^N. 
\end{split}
}

The following theorem gives the error estimates of the linear HDG solution $u_h$ and $\bq_h$.
\begin{theorem}\label{thm:erresti}
Let $(u,\bq)$ and $(u_h,\bq_h)$ be the solutions of \eqref{eq33}--\eqref{eq35} and \eqref{eq39}--\eqref{eq310}, respectively. Suppose $\tau\eqsim k $. Then there exists a positive constant $C_0<1$ independent of $k,h$ and $\tau$, such that if $k^3h^2 \leq C_0$, 
\begin{align}
k\Vert u-u_h\Vert_{L^2(\Omega)}&\lesssim (k+k^3h)\Vert\xi\Vert+k^2h\norm{\brh}\lesssim (k^2h^2+k^4h^3)C_u,\label{eu}\\
 \Vert\bq-\bq_h\Vert_{L^2(\Omega)}&\lesssim (k+k^3h)\Vert\xi\Vert+(1+k^2h)\norm{\brh}\lesssim (k^2h^2+k^4h^3)C_u.\label{eq}
 \end{align}
\end{theorem}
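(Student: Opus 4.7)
The plan is to carry out a modified duality argument, adapted from the FEM analysis of the indefinite Helmholtz problem, built on the HDG sesquilinear form $A$ together with the elliptic projections of \S\ref{Elliptic projection}. Starting from the Galerkin orthogonality \eqref{eq3133} and the splittings $u-u_h=\xi-\om_h$, $\bq-\bq_h=\brh-\bta_h$, one has $A(\om_h,\bta_h;v_h,\br_h)=A(\xi,\brh;v_h,\br_h)$ for every $(v_h,\br_h)\in(V_h,\bV_h)$. Testing with $(\om_h,\bta_h)$, taking real parts, and invoking \eqref{norm} yields the identity $\He{\om_h,\bta_h}^2=k^2\norm{\om_h}^2+\Re\big(A(\xi,\brh;\om_h,\bta_h)\big)$. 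Expanding the mixed term via the representation \eqref{A2} and annihilating the many pieces that vanish under the projection orthogonality \eqref{Pi} (together with inverse/trace inequalities, quasi-uniformity of $\T_h$, and the calibration $\tau\eqsim k$), the mixed term is bounded by $C(k^2\norm{\xi}^2+\norm{\brh}^2)+\tfrac12\He{\om_h,\bta_h}^2$. Absorbing yields $\He{\om_h,\bta_h}^2\lesssim k^2\norm{\om_h}^2+k^2\norm{\xi}^2+\norm{\brh}^2$, so the task is reduced to a sharp bound on $k\norm{\om_h}$.

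For that I deploy the Helmholtz dual problem with right-hand side $\om_h$: find $w,\bvp$ with $\bvp=\na w$, $-\na\cdot\bvp-k^2w=\om_h$ in $\Om$ and $\bvp\cdot\bn+\mathbf{i}kw=0$ on $\Ga$. Lemma~\ref{lemmareg} gives $\norm{w}_{H^2(\Om)}\lesssim k\norm{\om_h}$, supplemented by a special $H^3$-type regularity estimate tailored to this dual datum. By adjoint consistency (the analogue of \eqref{VP1} read off \eqref{A3}) one has $\norm{\om_h}^2=A(\om_h,\bta_h;w,\bvp)$, and inserting the elliptic projection $(\tw,\tvp)$ from \eqref{EP} splits this into $A(\om_h,\bta_h;w-\tw,\bvp-\tvp)+A(\om_h,\bta_h;\tw,\tvp)$. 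The first piece is bounded by $\He{\om_h,\bta_h}$ times the projection errors controlled in Theorem~\ref{theo41}, producing the crucial factor of order $kh$; the second, via the defining identity \eqref{EP} and $A=\hat A-k^2(\cdot,\cdot)$, collapses to $-k^2(\om_h,w-\tw)$ plus projection residuals, and Theorem~\ref{theo41} then delivers $\norm{w-\tw}\lesssim kh^2\norm{\om_h}$.

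Combining the two steps yields $k\norm{\om_h}\lesssim k^2h\He{\om_h,\bta_h}+k^3h^2\norm{\om_h}+(\text{projection errors})$. The smallness hypothesis $k^3h^2\leq C_0$ absorbs the middle summand, and feeding the resulting bound back into the energy estimate absorbs the $k^2h\He{\om_h,\bta_h}$ factor as well, producing $k\norm{\om_h}+\He{\om_h,\bta_h}\lesssim(k+k^3h)\norm{\xi}+(1+k^2h)\norm{\brh}$. The triangle inequalities $k\norm{u-u_h}\leq k\norm{\xi}+k\norm{\om_h}$ and $\norm{\bq-\bq_h}\leq\norm{\brh}+\norm{\bta_h}$ furnish the left inequalities of \eqref{eu}--\eqref{eq}, while Lemma~\ref{lem:Pi} with $\tau\eqsim k$ combined with $|u|_{H^2(\Om)}\lesssim kC_u$ and $|u|_{H^3(\Om)}\lesssim k^2C_u$ gives $\norm{\xi}\lesssim kh^2C_u$ and $\norm{\brh}\lesssim k^2h^2C_u$, completing the right-hand inequalities. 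The main obstacle is manufacturing the $kh$ gain in the duality step: it requires the projection errors of Theorem~\ref{theo41} to mesh correctly with the adjoint representation \eqref{A3} of $A$ and with the special regularity of the dual problem, without which one only recovers the pollution error $O(k^3h^2)$ already in the literature.
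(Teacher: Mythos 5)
Your overall architecture---the energy identity from the Galerkin orthogonality, then a modified duality argument built on the elliptic projections $(\tw,\tvp)$ of the dual solution and a special regularity estimate, closed under $k^3h^2\le C_0$---is exactly the paper's. But the execution of the central duality step has a genuine gap. You bound $A(\om_h,\bta_h;w-\tw,\bvp-\tvp)$ by ``$\He{\om_h,\bta_h}$ times the projection errors,'' and this fails on two counts. First, $A$ is not continuous with respect to the seminorm \eqref{norm}: $\He{v,\br}$ controls neither $\norm{v}$ nor $\norm{\na_h v}$, and the volume terms of $A$ (e.g.\ $-k^2(\om_h,\cdot)$ and $-(\om_h,\na_h\cdot\br)$, or $(\na_h\om_h,\br)$ after integration by parts) cost an extra factor $k^2$ or $h^{-1}$ that destroys the $kh$ gain you need. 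Second, even granting such a bound, the resulting estimate $k\norm{\om_h}\lesssim k^2h\He{\om_h,\bta_h}+\cdots$ does not close: substituting $\He{\om_h,\bta_h}\lesssim k\norm{\om_h}+k\norm{\xi}+\norm{\brh}$ from the energy step produces $k^3h\norm{\om_h}$ on the right, and $k^3h^2\le C_0$ does not make $k^2h$ small (indeed $k^2h\le k^{1/2}C_0^{1/2}$ is unbounded), so no absorption is possible; you would only recover the theorem under $k^2h\lesssim 1$, which is precisely the restriction the paper is designed to avoid.

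The fix is that no continuity estimate is needed at all. By the very definition \eqref{EP} of the elliptic projection, $\hat A(\om_h,\bta_h;w-\tw,\bvp-\tvp)=0$, so the first piece equals exactly $-k^2(\om_h,w-\tw)$; and the second piece equals $(\brh,\tvp)-k^2(\xi,\tw)$ by the identity \eqref{eq333}, itself an exact consequence of the HDG projection orthogonalities \eqref{Pi} with no trace or inverse inequalities. Hence $\norm{\om_h}^2=-k^2(\om_h,w-\tw)+(\brh,\tvp)-k^2(\xi,\tw)$: neither $\bta_h$ nor $\na_h\om_h$ survives, the duality bound \eqref{eo} carries $\norm{\om_h}$ only with coefficient $k^3h^2+kh$, and absorption under $k^3h^2\le C_0$ goes through. (Note also that you have swapped the roles of the two pieces---it is the first, not the second, that collapses to $-k^2(\om_h,w-\tw)$---and the ``special regularity'' you allude to is the inverse-inequality bound \eqref{eq418} on $\norm{\na\cdot\bvp}_{H^1(\Om)}$, needed so that the $\tau^{-1}h^2\vert\na\cdot\bvp\vert_{H^1(\Om)}$ contribution to $\norm{w-\Pi_1^{-\tau}w}$ comes out as $(kh^2+k^{-1}h)\norm{\om_h}$ rather than forcing $k^2h$ small.) The remaining ingredients of your proposal---the energy estimate, the triangle inequalities, and the final interpolation bounds via Lemma~\ref{lem:Pi}---match the paper.
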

\begin{proof}
First, from  \eqref{eq3133}, \eqref{A}, and \eqref{Pi}, we conclude that
\begin{align}\label{eq333}
A(\om_h,\bta_h; v_h,\br_h)=A(\xi,\brh; v_h,\br_h)
=(\brh,\br_h) -k^2(\xi, v_h),\quad\forall v_h\in V_h,\br_h\in\bV_h,
\end{align}
and hence from \eqref{norm},
\begin{align*}
\norm{\bta_h}^2\le\He{ \om_h,\bta_h}^2=\Re\left((\brh,\bta_h) -k^2(\xi, \om_h) \right)+k^2(\om_h, \om_h),
\end{align*}
which implies by the Cauchy's inequality and Young's inequality that
\eq{\label{eq234}
\norm{\bta_h}\lesssim \norm{\brh}+k\norm{\xi}+k\norm{\om_h}.}

Next we derive the estimate of $\norm{\om_h}$ by the modified duality argument which use the elliptic projections of the solutions of the dual problem instead of the interpolations of them used in the traditional duality argument.
Introduce the dual problem:
\begin{align}
\bvp-\nabla w& =0, \quad -\nabla\cdot\bvp-k^2w =\om_h,\quad \text{in} \ \Omega,\label{eq414}\\
\bvp\cdot \bn-\mathbf{i}kw& =0,\quad \text{on} \ \Gamma.\label{eq416}
\end{align}
Similar to Lemma~\ref{lemmareg}, we have
 \begin{align}
k^2\Vert w\Vert+k\Vert w\Vert_{H^1(\Omega)}+\norm{w}_{H^2(\Om)}+\norm{\bvp}_{H^1(\Om)}&\lesssim k\Vert \om_h\Vert, \label{eq417}
\end{align}
and from the inverse inequality
\eq{\label{eq418}\norm{\na\cdot\bvp}_{H^1(\Om)}=\norm{k^2w+\om_h}_{H^1(\Om)}\lesssim(k^2+h^{-1})\norm{\om_h}.}
From \eqref{A} and \eqref{eq414}--\eqref{eq416}, we have
\eqn{
A(v_h,\br_h; w,\bvp)&=(\br_h,\bvp)-(v_h,k^2w+\na\cdot\bvp)-(\br_h,\na w)\nn\\
&=(v_h, \om_h) ,\quad \forall\, v_h\in V_h,\br_h\in\bV_h.}
Therefore, it follows from the definition of elliptic projections $\tw$ and $\tvp$ in \eqref{EP}, \eqref{hA}, \eqref{eq333}, that
\begin{align*}
{\Vert \om_h\Vert}^2=&A(\om_h,\bta_h; w,\bvp)\\
=&A(\om_h,\bta_h; w-\tilde{w}_h,\bvp-\tilde{\bvp}_h)+A(\om_h,\bta_h;\tilde{w}_h,\tilde{\bvp}_h)\\
=&-k^2(\om_h, w-\tilde{w}_h) +(\brh,\tilde{\bvp}_h) -k^2(\xi,\tilde{w}_h) \\
=&-k^2(\om_h, w-\tilde{w}_h) +(\brh,\tilde{\bvp}_h-\bvp) +(\brh,\na w) -k^2(\xi,\tilde{w}_h-w)-k^2(\xi, w).
\end{align*}
Let $Q_0w$ be the $L^2$-projection of $w$ onto the piecewise constant space $\prod_{K\in\mathcal{T}_h}P_0(K)$. From the othogonalities in \eqref{Pi} we have
\begin{align*}
{\Vert \om_h\Vert}^2
=&-k^2(\om_h, w-\tilde{w}_h) +(\brh,\tilde{\bvp}_h-\bvp)  -k^2(\xi,\tilde{w}_h-w) \\
&+\left(\brh, \nabla (w-I_hw)\right)-k^2(\xi, w-Q_0w).
\end{align*}
Suppose $\tau h\lesssim 1$. Then using Theorem \ref{theo41} and Lemma \ref{lem:Pi}, we obtain
\begin{align*}
{\Vert \om_h\Vert}^2\leq& k^2\big(\Vert \om_h\Vert+\Vert \xi\Vert\big)\Vert w-\tilde{w}_h\Vert+\Vert\brh\Vert\Vert \bvp -\tilde{\bvp}_h\Vert\\
&+\Vert\brh\Vert\Vert \nabla(w-I_hw)\Vert+k^2\Vert \xi\Vert\Vert w-Q_0w\Vert\\
\lesssim& k^2\big(\Vert \om_h\Vert+\Vert \xi\Vert\big)\bigl(\Vert w-\Pi_1^*w\Vert+h\Vert\bvp-\Pi_2^*\bvp\Vert\bigr)\\
&+\Vert\brh\Vert\bigl(\Vert \bvp -\Pi_2^*\bvp \Vert+h\vert w\vert_{H^2(\Omega)}\bigr)+k^2h\Vert \xi\Vert\vert w\vert_{H^1(\Omega)}\\
\lesssim&k^2\big(\Vert \om_h\Vert+\Vert \xi\Vert\big)\bigl((h^2+\tau h^3)\vert w\vert_{H^2(\Omega)}+\tau^{-1}h^2\vert\nabla\cdot\bvp\vert_{H^1(\Om)}+h^2\vert\bvp\vert_{H^1(\Omega)}\bigr)\\
&+\Vert\brh\Vert\bigl((h+\tau h^2)\vert w\vert_{H^2(\Omega)}+h\vert\bvp\vert_{H^1(\Omega)}\bigr)+k^2h\Vert \xi\Vert\vert w\vert_{H^1(\Omega)},
\end{align*}
which together with \eqref{eq417}--\eqref{eq418} implies that
\begin{align}\label{eo}
\Vert \om_h\Vert\lesssim&k^2\big(\Vert \om_h\Vert+\Vert \xi\Vert\big)\bigl(kh^2+\tau^{-1}h^2(k^2+h^{-1})\bigr)+\Vert\brh\Vert\bigl(kh\bigr)+k^2h\Vert \xi\Vert\\
\lesssim&(k^3h^2+kh)\Vert \om_h\Vert+(k^3h^2+kh+k^2h)\Vert\xi\Vert+kh\norm{\brh}.\nn
\end{align}
Clearly, there exists a constant $C_0>0$ independent of $k$ and $h$ such that if $k^3h^2<C_0$, then
\begin{align}\label{eom}
\Vert \om_h\Vert\lesssim&(kh+k^2h)\Vert\xi\Vert+kh\norm{\brh},
\end{align}
which together with Lemma~\ref{lem:Pi} implies that \eqref{eu} holds. And \eqref{eq} follows by substituting \eqref{eom} into \eqref{eq234} and using the triangle inequality.
This completes the proof of Theorem \ref{thm:erresti}. 
\end{proof}
\begin{remark}\label{rerror} {\rm (a)} The error bounds in the theorem consist of two parts, the interpolation error (or HDG projection error) $O(k^2h^2)$ and the pollution error $O(k^4h^3)$. The results in \cite{Chen:Lu:Xu:2012} show that  $k\norm{u-u_h}_{L^2}=O(k^2h^2+k^3h^2)$ and $\norm{\bq-\bq_h}_{L^2}=O(kh+k^3h^2)$ if $\tau=\frac{1}{h}\mathbf{i}$. Our results say that taking $\tau\eqsim k$ improves both the pollution error (to $O(k^4h^3)$) and the error of $\bq_h$ (to full order in $h$).

{\rm (b)} The trick of using the regularity estimate \eqref{eq418} in \eqref{eo} to derive \eqref{eom} is crucial for the proof of the theorem. Otherwise, if the usual regularity estimate $\norm{\na\cdot\bvp}\le\norm{w}_{H^2(\Om)}\lesssim k\norm{\om_h}$ was used instead of using \eqref{eq418}, it would  be 
required that $k^2h$ is sufficiently small.

{\rm (c)} If $\tau=\mathbf{i}\al$ for some $\al\eqsim k$, the error estimates \eqref{eu}--\eqref{eq} still hold but without any mesh constraint (that is, the condition $k^3h^2\le C_0$ can be removed). While our analysis for the real $\tau$ is still meaningful since our dispersion analysis in \S\ref{sec:Dispersion} shows that the optimal penalty parameter is usually a real number. The error estimates for the case of real $\tau$ is still open when the mesh condition $k^3h^2\le C_0$ is not satisfied.

{\rm (d)} Our dispersion analysis and numerical tests in the last two sections indicate that the pollution errors may be eliminated in 1D and greatly reduced in higher dimensions by tuning the penalty parameter $\tau$. 

{\rm (e)} The estimate \eqref{eom} says that the error between $u_h$ and $\Pi_1u$ is superconvergent in $h$, which will be used to do the postprocessing in the next section.

{\rm (f)} In the part II of this series, the following error estimates will be derived for the $p^{\rm th}$ order HDG methods under the mesh condition $k^{2p+1}h^{2p}\le C_0$. 
\eqn{k\Vert u-u_h\Vert+\Vert \bq-\bq_h\Vert\lesssim (kh)^{p+1}+k(kh)^{2p+1}.}
\end{remark}

\section{Postprocessing}\label{Postprocessing}
In this section, we apply the standard postprocessing for the HDG methods for elliptic problems (see e.g. \cite{Cockburn:Bernardo:2010}) to our case of Helmholtz equations with high wave numbers. 

 Denote by $m_K v=\frac{1}{\vert K\vert}\int_{K}vdx$ the integral average of a function $v$ on $K$. Clearly, $m_K\Pi_1u=m_Ku$ and as a consequence of \eqref{eom}, $m_Ku_h$ is superclose to $m_Ku$.  Define $P_2^{0}(K):=\lbrace v\in P_2(K):\;m_Kv=0\rbrace$. The postprocessing solution $u_h^*$ is defined by $u_h^{*}|_K\in P_2(K)$ satisfying 
\begin{gather}
\begin{cases}
(\nabla u_h^*,\nabla v)_K-k^2(u_h^*,v)_K=(f, v)_K-\int_{\partial K}\hat{\bq}_h\cdot n_K\bar{v},& \forall v\in P_2^0(K),\\
m_Ku_h^{*}=m_Ku_h,& \forall K\in\mathcal{T}_h.
\end{cases}\label{eq551}
\end{gather}
Let $V_{h,2}:=\prod_{K\in\mathcal{T}_h}P_2(K)$, the following theorem gives the error estimate of $u_h^{*}$.

\begin{theorem}\label{theo51} Choose $g_h$ such that $g_h|_e$ is the $L^2$-projection of $g|_e$ onto $P_2(e)$ for any $e\in\E_h^N$. 
Assume that $\tau\eqsim k$.  Then there exists a positive constant $C_0$ independent of $k$ and $h$ such that the following estimate holds under the mesh condition $k^3h^2\le C_0$.
\begin{align}\label{errpost}
k\Vert u-u_h^*\Vert\lesssim (k^3h^3+k^4h^3)C_u.
\end{align}
\end{theorem}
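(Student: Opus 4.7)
The plan is to adapt the standard element-by-element postprocessing analysis for elliptic HDG methods (\cite{Cockburn:Bernardo:2010}) to the indefinite Helmholtz operator, carefully tracking $k$-dependence throughout. On each $K\in\T_h$ I would introduce a \emph{local Helmholtz--Ritz comparison} $\tilde u|_K\in P_2(K)$ defined by $(\nabla\tilde u,\nabla v)_K - k^2(\tilde u,v)_K = (\nabla u,\nabla v)_K - k^2(u,v)_K$ for all $v\in P_2^0(K)$, together with $m_K\tilde u = m_K u$. The local bilinear form is coercive on $P_2^0(K)$ because $\|v\|_K\lesssim h_K\|\nabla v\|_K$ there (Poincare on mean-zero polynomials) while $k^2h^2\ll 1$ under the mesh assumption $k^3h^2\le C_0$. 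A standard Bramble--Hilbert argument comparing $\tilde u$ to any mean-preserving $P_2$-interpolant of $u$ yields $\|u-\tilde u\|_K\lesssim h_K^3|u|_{H^3(K)}$, so $k\|u-\tilde u\|\lesssim k^3h^3C_u$, producing the $k^3h^3$ contribution to \eqref{errpost}.

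Integration by parts in the exact PDE gives $(\nabla u,\nabla v)_K - k^2(u,v)_K = (f,v)_K - \int_{\pa K}\bq\cdot\bn_K\bar v$, so subtracting \eqref{eq551} yields the local error equation $(\nabla(u_h^*-\tilde u),\nabla v)_K - k^2(u_h^*-\tilde u,v)_K = \int_{\pa K}(\bq-\hat\bq_h)\cdot\bn_K\bar v$ for $v\in P_2^0(K)$, together with $m_K(u_h^*-\tilde u)=-m_K\om_h$. Decomposing $u_h^*-\tilde u=s+c_K$ with $s\in P_2^0(K)$, testing with $s$, and applying coercivity together with the scaled trace inequality $\|s\|_{\pa K}\lesssim h_K^{1/2}\|\nabla s\|_K$ and Poincare, I would obtain $\|u_h^*-\tilde u\|^2\lesssim h^3\sum_K\|(\bq-\hat\bq_h)\cdot\bn_K\|_{\pa K}^2+\|\om_h\|^2$. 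The second term is immediately at the required order: the superconvergence estimate \eqref{eom} combined with Lemma~\ref{lem:Pi} gives $k\|\om_h\|\lesssim(k^3h^3+k^4h^3)C_u$, matching the pollution term.

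The technical heart of the proof---and the main obstacle---is bounding $h^3\sum_K\|(\bq-\hat\bq_h)\cdot\bn_K\|_{\pa K}^2$. Since $\bq_h|_K\in(P_1)^d$ and $u_h|_K,\hat u_h|_e\in P_1$, the definition \eqref{eq311} implies $\hat\bq_h\cdot\bn_K|_e\in P_1(e)$, so if $P_1^e$ denotes the $L^2(e)$-projection onto $P_1(e)$, the $P_1$-orthogonal residue of $(\bq-\hat\bq_h)\cdot\bn_K$ on $e$ is simply $(\mathrm{I}-P_1^e)(\bq\cdot\bn_K)$, a pure approximation quantity of order $h^{3/2}|\bq|_{H^2(K)}$. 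For the $P_1(e)$-component, on an interior edge $e=K_1\cap K_2$ I would combine \eqref{eq311}--\eqref{eq314} with the HDG-projection orthogonality \eqref{P3} applied from \emph{both} sides of $e$ with $\mu\in P_1(e)$ to obtain the key identity $\int_e(\bq-\hat\bq_h)\cdot\bn_{K_1}\bar v_1 = -\int_e\av{\bta_h}\cdot\bn_{K_1}\bar v_1 - \tfrac{\tau}{2}\int_e\jm{\om_h}\cdot\bn_{K_1}\bar v_1$ for every $v_1\in P_1(e)$, in which the otherwise-large $\tau\jm{\xi}$ contributions from the two sides of $e$ cancel exactly; without this cancellation the $\tau\jm{u_h-u}$ contribution would be too large by a factor $k$ and would destroy the claimed $O(k^4h^3)$ pollution bound. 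On boundary edges, \eqref{eq319} and the impedance BC \eqref{eq32} yield $(\bq-\hat\bq_h)\cdot\bn=(\tau+\mathbf{i}k)^{-1}[\mathbf{i}k\tau(u-u_h)-\mathbf{i}k(\bq_h-\bq)\cdot\bn-\tau(g-g_h)]$, with $\|g-g_h\|_e\lesssim h^{5/2}|g|_{H^{5/2}(e)}$ by the $P_2$-projection. Invoking discrete trace inequalities for $\bta_h$ and $\om_h$ together with the error bounds from Theorem~\ref{thm:erresti} and \eqref{eom} then gives $h^3\sum_K\|(\bq-\hat\bq_h)\cdot\bn_K\|_{\pa K}^2\lesssim(k^4h^6+k^6h^6)C_u^2$, whence $k\|u_h^*-\tilde u\|\lesssim(k^3h^3+k^4h^3)C_u$, and the triangle inequality $\|u-u_h^*\|\le\|u-\tilde u\|+\|u_h^*-\tilde u\|$ finishes the proof of \eqref{errpost}.
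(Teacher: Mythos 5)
Your proposal is correct and would establish \eqref{errpost}; its skeleton --- a Stenberg-type local argument with a mean-zero quadratic test function, the local Poincar\'e and trace inequalities \eqref{eq53}, and the superconvergence of the element averages $m_K(u-u_h)=m_K(\Pi_1u-u_h)$ controlled by $\norm{\om_h}$ via \eqref{eom} --- is the same as the paper's, but two ingredients genuinely differ. First, the paper compares $u_h^*$ with the plain $L^2$-projection $\tu$ of $u$ onto $V_{h,2}$ and therefore carries the consistency terms $(\nabla(\tu-u),\nabla v)_K-k^2(\tu-u,v)_K$ through the estimate, whereas you use a local Helmholtz--Ritz comparison that makes the error equation homogeneous up to the flux residual; both work, and yours is marginally cleaner at the cost of proving the (easy) approximation property of the local projection. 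Second, and more substantially, the paper estimates the flux residual by splitting $\bq-\hat\bq_h=(\bq-\bq_h)+(\bq_h-\hat\bq_h)$, integrating the first piece by parts, rewriting the second via \eqref{eq314} and \eqref{eq319} as $\tfrac12\Jm{\bq_h-\bq}+\tfrac{\tau}{2}\Jm{u_h-u}$ (resp.\ its boundary analogue), and bounding these jumps by trace and inverse inequalities together with the $L^2$ error estimates of Theorem~\ref{thm:erresti}; you instead isolate the $P_1(e)$-moments and invoke the orthogonality \eqref{Pi} to cancel the $\av{\brh}+\tfrac{\tau}{2}\jm{\xi}$ contribution exactly. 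Your cancellation identity is correct, but your claim that it is \emph{necessary} --- that a direct bound of $\tau\Jm{u_h-u}$ would lose a factor of $k$ --- is not: the direct route contributes $k\cdot h\tau\norm{u-u_h}\lesssim k^2h\,(kh^2+k^3h^3)C_u=(k^3h^3+k^5h^4)C_u$, and $k^5h^4=k^4h^3\cdot kh\ll k^4h^3$ since $kh\ll1$ under $k^3h^2\le C_0$; this is exactly how the paper proceeds. Finally, on boundary faces you bound $\norm{g-g_h}_{L^2(e)}$ by $h^{5/2}\vert g\vert_{H^{5/2}(e)}$, which imports a regularity assumption on $g$ not present in the theorem; this is unnecessary, because the test function satisfies $v\vert_e\in P_2(e)$ while $g_h\vert_e$ is by hypothesis the $L^2(e)$-projection of $g\vert_e$ onto $P_2(e)$, so $\int_e(g-g_h)\bar v=0$ and the term drops out entirely --- which is precisely why the theorem prescribes that choice of $g_h$.
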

\begin{proof} Following the proof of \cite[Theorem 2.2]{stenberg1991postprocessing}, 
let $\tu\in V_{h,2}$ be the $L^2$-projection of $u$ to $V_{h,2}$ and take $v=(I-m_K)(\tu-u_h^{*})\in P_2^0(K)$ in \eqref{eq551} to obtain
\begin{align*}
(\nabla v,\nabla v)_K&-k^2(v, v)_K=(\nabla(\tu-u_h^{*}),\nabla v)_K-k^2(\tu-u_h^{*}, v)_K\\
=&(\nabla (\tu-u),\nabla v)_K-k^2(\tu-u, v)_K-\int_{\partial K}(\bq-\hat{\bq}_h)\cdot \bn_K \bar{v} 
\end{align*}
Clearly,
\eqn{\int_{\partial K}(\bq-\bq_h)\cdot \bn_K \bar{v}&=(\nabla\cdot(\bq-\bq_h),v)_K+(\bq-\bq_h,\nabla v)_K\\
&=(\nabla\cdot(\bq-\br_h),v)_K+(\bq-\bq_h,\nabla v)_K,\quad\forall \br_h\in\bV_h.}
We have
\begin{align}\label{ev1}
(\nabla v,\nabla v)_K
=&(\nabla (\tu-u),\nabla v)_K-k^2(\tu-u, v)_K-\int_{\partial K}(\bq_h-\hat{\bq}_h)\cdot \bn_K \bar{v}  \\
&-(\nabla\cdot(\bq-\br_h),v)_K-(\bq-\bq_h,\nabla v)_K+k^2(v, v)_K.\nn
\end{align}
Next we estimate the first three terms on the right hand side of \eqref{ev1}. For any $ v_h\in V_{h,2}$, we have
\begin{align}
\Vert u-\tu\Vert_{L^2(K)}&\leq\Vert u-v_h\Vert_{L^2(K)},\label{eq565a}\\
\Vert \nabla(u-\tu)\Vert_{L^2(K)}&\leq\Vert \nabla(u-v_h)\Vert_{L^2(K)}+\Vert\nabla(v_h-\tu)\Vert_{L^2(K)}\label{eq565}\\
&\lesssim\Vert\nabla(u-v_h)\Vert_{L^2(K)}+h_K^{-1}\Vert v_h-\tu\Vert_{L^2(K)}\nn\\
&\lesssim \Vert\nabla(u-v_h)\Vert_{L^2(K)}+h_K^{-1}\Vert u-v_h\Vert_{L^2(K)}.\nn
\end{align}
For $e\subset\pa K$ with $e\in\mathcal{E}_h^{I}$, it follows from \eqref{eq314} that
\begin{align}\label{eq561}
\big|(\bq_h-\hat{\bq}_h)\cdot \bn_K\big|&\le\big|(\bq_h-\lbrace \bq_h\rbrace)\cdot \bn_K\big|+\frac{\tau}{2}\big|\Jm{u_h}\big|\\
&=\frac{1}{2}\big|\Jm{\bq_h-\bq}\big|+\frac{\tau}{2}\big|\Jm{u_h-u}\big|.\nn
\end{align}
For $e\subset\pa K$ with $e\in\mathcal{E}_h^N$, it follows from \eqref{eq319} and \eqref{eq36} that
\begin{align}\label{eq562}
\bigg|\int_e(\bq_h-\hat{\bq}_h)\cdot \bn_K \bar{v}\bigg|
&=\bigg|\int_e\Big(\frac{\tau}{\tau+\mathbf{i}k}(\bq_h\cdot \bn_K+g_h)-\frac{\mathbf{i}k\tau}{\tau+\mathbf{i}k}u_h\Big)\bar{v}\bigg|\\
&=\bigg|\int_e\Big(\frac{\tau}{\tau+\mathbf{i}k}(\bq_h-\bq)\cdot \bn_K-\frac{\mathbf{i}k\tau}{\tau+\mathbf{i}k}(u_h-u)\Big)\bar{v}\bigg|.\nn
\end{align}
Since $m_Kv=0$, from the local Poincar\'e inequality, the local trace inequality and  the inverse inequality, we conclude that 
\begin{align}
\Vert v\Vert_{L^2(K)}\lesssim h_K\Vert \nabla v\Vert_{L^2(K)},\quad \Vert v\Vert_{L^2(\partial K)}\lesssim h_K^{\frac{1}{2}}\Vert \nabla v\Vert_{L^2(K)}.\label{eq53}
\end{align}
By using \eqref{ev1}, the Cauchy's inequality, and \eqref{eq565a}--\eqref{eq53}, we have
\begin{align}\label{ev}
\Vert\nabla v\Vert_{L^2(K)}\lesssim&~\Vert\nabla(u-v_h)\Vert_{L^2(K)}+(h_K^{-1}+k^2h_K)\Vert u-v_h\Vert_{L^2(K)}\nn\\
&+h_K^{\frac{1}{2}}\big(\norm{\Jm{\bq_h-\bq}}_{L^2(\pa K)}+\tau\norm{\Jm{u_h-u}}_{L^2(\pa K)}\big)\\
&+h_K\Vert\nabla\cdot(\bq-\br_h)\Vert_{L^2(K)}+\Vert\bq-\bq_h\Vert_{L^2(K)}+k^2h_K\Vert v\Vert_{L^2(K)}.\nn
\end{align}
For any $v_h\in V_{h,2}$ and $\br_h\in\bV_h$, we have
\eqn{&\norm{\Jm{\bq_h-\bq}}_{\pa\T_h}+\tau\norm{\Jm{u_h-u}}_{\pa\T_h}\\
&=\norm{\Jm{\bq_h-\br_h+\br_h-\bq}}_{\pa\T_h}+\tau\norm{\Jm{u_h-v_h+v_h-u}}_{\pa\T_h}\\
&\lesssim\norm{\bq_h-\br_h}_{\pa\T_h}+\norm{\br_h-\bq}_{\pa\T_h}+\tau\big(\norm{u_h-v_h}_{\pa\T_h}+\norm{v_h-u}_{\pa\T_h}\big)\\
&\lesssim h^{-\frac12}\big(\norm{\bq_h-\br_h}+\norm{\br_h-\bq}+\tau\norm{u_h-v_h}+\tau\norm{v_h-u}\big)\\
&\quad+h^\frac12\big(\norm{\na_h(\br_h-\bq)}+\tau\norm{\na_h(v_h-u)}\big)\\
&\lesssim h^{-\frac12}\big(\norm{\bq_h-\bq}+\norm{\bq-\br_h}+\tau\norm{u_h-u}+\tau\norm{u-v_h}\big)\\
&\quad+h^\frac12\big(\norm{\na_h(\bq-\br_h)}+\tau\norm{\na_h(u-v_h)}\big).
}
Supposing $kh$ is sufficiently small, plugging the above estimate into the sum of \eqref{ev} over $\T_h$ and using \eqref{eq53}, we obtain
\eq{\label{ev2}
\norm{v}\lesssim~ h\norm{\na_h v}\lesssim& ~h\Vert\bq-\bq_h\Vert+kh\norm{u-u_h}+h\Vert\nabla_h(u-v_h)\Vert+\Vert u-v_h\Vert\\
&+h^2\norm{\na_h(\bq-\br_h)}+h\norm{\bq-\br_h}.\nn}
On the other hand,
\begin{align}
\Vert m_K( \tu-u_h^*)\Vert_{L^2(K)}=\Vert m_K(u-u_h)\Vert_{L^2(K)}&=\Vert m_K(\Pi_1 u-u_h)\Vert_{L^2(K)}\leq\Vert\om_h\Vert_{L^2(K)}.\label{eq56}
\end{align}
Therefore, by using the triangle inequality, \eqref{eq565a}, \eqref{ev2}--\eqref{eq56}, Theorem \ref{thm:erresti}, and \eqref{eom}, we obtain
\eq{\label{erruh*}
\Vert u-u_h^*\Vert\leq&~\Vert u-\tu\Vert+\Vert v\Vert+\bigg(\sum_{K\in\T_h}\Vert m_K(\tu-u_h^*)\Vert_{L^2(K)}^2\bigg)^\frac12\nn\\
\lesssim&~ k^2h\Vert\xi\Vert+kh\norm{\brh}+\inf_{v_h\in V_{h,2}}\big(h\Vert\nabla_h(u-v_h)\Vert+\Vert u-v_h\Vert\big)\\
&+\inf_{\br_h\in\bV_h}\big(h^2\norm{\na_h(\bq-\br_h)}+h\norm{\bq-\br_h}\big)\nn}
which together with Lemma \ref{lem:Pi} and the interpolation error estimates implies that \eqref{errpost} holds. This completes the proof of the theorem.
\end{proof}
\begin{remark}\label{rpost}
{\rm (a)} Comparing \eqref{errpost} with \eqref{eu}, we observe that the postprocessing procedure improves only the interpolation error (from $O(k^2h^2)$ to $O(k^3h^3)$) but leaves the pollution error $O(k^4h^3)$ unchanged.

{\rm (b)} In the next section, we explore how to reduce the pollution by tuning the penalty parameters.
\end{remark}

\section{Dispersion analysis}\label{sec:Dispersion}
In this section, we first define a discrete wave number $k_h$ of HDG method for 1D Helmholtz equation on equidistant grids, and analyze how the penalty parameter $\tau$ affects the phase error $|k_h-k|$. Then, in order to save space, we just list the results for the 2D Helmholtz equation on equilateral triangulations.
 It is well known that the pollution error is of the same order as the phase error for the FEM or CIP-FEM on structured meshes \cite{ihlenburg1997finite,Ainsworth04,Zhu:Burman:Wu:2012}. It will be shown that the phase error of the HDG method may be reduced in both 1D and 2D and even eliminated in 1D by choosing proper penalty parameters, which are expected to be used to reduce the pollution errors in higher dimensions and to eliminate the pollution error in one dimension.

\subsection{One dimensional analysis}
Note that the plane wave $e^{\mathbf{i} k x}$ is a solution to the homogeneous Helmholtz equation 
\eq{\label{Hf0}F(u,k):=-u''-k^2 u=0,}
 that is, the wave number $k$ satisfies the following equation
\eq{\label{dp1}F(e^{\mathbf{i} k x},k)=0.}
The discrete wave number $k_h$ is defined by mimic the above continuous procedure as follows. Let $n>2$ be an integer and $x_i=i h, i=0,1,\cdots, n$ be the nodes of the mesh $\T_h$. Denote by $u_i^\pm:=u_h(x_i\pm0)$, $\bq_i^\pm:=\bq_h(x_i\pm0)$, and $\hat u_i^\pm:=\hat u_h(x_i\pm0)$. By taking $\br_h$ and $v_h$ in \eqref{eq39} and \eqref{eq310} (with $f=0$) to be the nodal basis functions on the $i$-th interval $[x_{i-1}, x_i]\subset (0,1)$, using \eqref{eq311}, and some simple calculations, we obtain the following equations on the $i$-th interval:
\eq{
\frac{h}6\big(2\bq_{i-1}^++\bq_i^-\big)+\frac12\big(u_{i-1}^++u_i^-\big)&=\hat u_{i-1},\\
\frac{h}6\big(\bq_{i-1}^++2\bq_i^-\big)-\frac12\big(u_{i-1}^++u_i^-\big)&=-\hat u_i,\\
-\frac12\big(\bq_{i-1}^++\bq_i^-\big)+\frac{k^2h}6\big(2u_{i-1}^++u_i^-\big)&=-\bq_{i-1}^++\tau\big(u_{i-1}^+-\hat u_{i-1}\big),\\
\frac12\big(\bq_{i-1}^++\bq_i^-\big)+\frac{k^2h}6\big(u_{i-1}^++2u_i^-\big)&=\bq_i^-+\tau\big(u_i^--\hat u_i\big).
} 
By solving the above system of four equations, $u_{i-1}^+,u_i^-,\bq_{i-1}^+$, and $\bq_i^-$ are explicitly expressed in  $\hat u_{i-1}$ and $\hat u_i$ (we omitted the specific expressions to save space), and hence  $u_i^+$ and $\bq_i^+$ are expressed in $\hat u_i$ and $\hat u_{i+1}$. Then we substitute these expressions into \eqref{eq311} and \eqref{eq312} with $e=x_i$ and obtain the HDG equation of $\hat u_h$ at the interior node $x_i$:
\eq{\label{HDGhuh}F_h(\hat u_h,k):=S\hat u_{i-1}+R\hat u_i+S\hat u_{i+1}=0,}
where
\eq{
S:=&-1-\frac{k^2h^2(12\tau h-3k^2h^2+2\tau^2h^2)}{(12+2\tau h-k^2h^2)(6\tau h-k^2h^2)},\label{S}\\
R:=&2-\frac{2k^2h^2(24\tau h-3k^2h^2+4\tau^2h^2-k^2h^2\tau h )}{(12+2\tau h-k^2h^2)(6\tau h-k^2h^2)}.\label{R}}
Similar to \eqref{dp1}, the discrete wave number $k_h$ is defined as the solution to following equation
\eq{\label{dwn}
F_h(I_he^{\mathbf{i} k x},k_h)=0,}
which is located at the solution branch near to the wave number $k$. Clearly, the above nonlinear equation has multiple solutions, while the other solutions are called spurious wave numbers and will not discussed in this paper. 
For simplicity, denote by $t=kh$, $t_h=k_h h$, and $s=\tau h$. Using \eqref{HDGhuh}--\eqref{R}, \eqref{dwn} may be rewritten as
\eqn{2(12+2s-t_h^2)(6s-t_h^2)(1-\cos t)-2t_h^2\big(24s-3t_h^2+4s^2-t_h^2s+(12s-3t_h^2+2s^2)\cos t\big)=0.}
The difference $\de:=t_h^2-t^2$ between $t_h^2$ and $t^2$ satisfies the following equation:
\eq{\label{kh}
a_2\de^2+a_1\de&+a_0=0,\\
a_2:=&4+s+2\cos t,\notag\\
a_1:=&(12-4s-2s^2+4t^2)(\cos t-1)-2(6+s)(3s-t^2),\notag\\
a_0:=&(2b(t^2+6)+t^4)(s^2-t^2) + (4b(t^2+18)+3t^4)s,\quad b:=1-t^2/2-\cos t.\notag
}
Therefore, the discrete wave number is given explicitly by
\eq{\label{th2}
k_h=\frac{t_h}h,\quad t_h^2=t^2+\frac{-a_1-\sqrt{a_1^2-4a_0a_2}}{2a_2}.}
Clearly, $k_h=k$ (i.e $\de=0$) if $\tau$ (i.e. $s$) is so chosen such that $a_0=0$. That is, we have the following lemma which says that the phase error of the HDG solution for the 1D problem may be eliminated completely by choosing proper penalty parameter $\tau$.
\begin{lemma}\label{lem:pf} $k_h=k$ if $kh\le \pi$ and
\eq{\label{tau_pf}
\tau=\tau_o:=k\frac{\sqrt{(4b(t^2+18)+3t^4)^2+(4b(t^3+6t)+2t^5)^2}-(4b(t^2+18)+3t^4)}{(4b(t^3+6t)+2t^5)}}
where $b=1-t^2/2-\cos t$.
\end{lemma}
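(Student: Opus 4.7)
The plan is to reduce the equality $k_h=k$ to a quadratic equation in the penalty parameter and then match its positive root against the claimed formula for $\tau_o$. From \eqref{th2}, $k_h=k$ is equivalent to $\delta := t_h^2 - t^2 = 0$, i.e.\ to $-a_1 - \sqrt{a_1^2 - 4a_0 a_2} = 0$ with the branch chosen in \eqref{th2}. Squaring and cancelling, this reduces (provided $a_2\neq 0$ and $a_1\le 0$) to the single scalar condition
\begin{align*}
a_0 = 0.
\end{align*}
Both the nonvanishing of $a_2$ and the sign of $a_1$ will have to be verified in the regime $t = kh \le \pi$; see the obstacle below.

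Granting this reduction, I would view $a_0$ as a quadratic polynomial in $s := \tau h$. With the shorthand $A := 2b(t^2+6)+t^4$ and $B := 4b(t^2+18)+3t^4$, the constant displayed in \eqref{kh} reads
\begin{align*}
a_0 \;=\; A\,s^2 \;+\; B\,s \;-\; A\,t^2,
\end{align*}
whose positive root is $s_{+} = \bigl(-B + \sqrt{B^2 + 4A^2 t^2}\bigr)/(2A)$. The key algebraic observation is that the denominator appearing in \eqref{tau_pf} factors as $4b(t^3+6t)+2t^5 = 2tA$; dividing numerator and denominator by $t$ inside the square root and using $s = \tau h$, $t = kh$ then yields
\begin{align*}
\tau \;=\; \frac{s_+}{h} \;=\; k\cdot\frac{\sqrt{B^2 + (2tA)^2} - B}{2tA},
\end{align*}
which is precisely the expression \eqref{tau_pf} for $\tau_o$. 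Hence $\tau = \tau_o$ makes $a_0$ vanish, and the reduction above gives $k_h=k$.

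The main obstacle, and essentially the only non-mechanical part of the argument, is a careful sign analysis in the range $0 < t \le \pi$ to justify the reduction step and the selection of the positive root. Specifically, I need: (i) $A > 0$, so that $a_0$ is a genuine quadratic in $s$ whose positive root is unique and coincides with $s_+$; (ii) $a_2 = 4 + s + 2\cos t \neq 0$, so that the quadratic formula \eqref{th2} for $\delta$ applies at this value of $s$; and (iii) $a_1 \le 0$ at $s = \tau_o h$, so that the branch $-a_1 - \sqrt{a_1^2}$ in \eqref{th2} indeed evaluates to $0$ (rather than $-2a_1/(2a_2)$) once $a_0=0$. All three follow from an elementary analysis of the single auxiliary function $b(t) = 1 - t^2/2 - \cos t$ on $(0,\pi]$, together with the positivity of $s = \tau_o h$ that the formula itself exhibits. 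With these sign facts in hand, the derivation above collapses to routine algebra and the lemma follows.
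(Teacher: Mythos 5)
Your proposal is correct and follows essentially the same route as the paper: reduce $k_h=k$ to $a_0=0$, verify $2b(t^2+6)+t^4>0$ on $0<t\le\pi$, and solve the resulting quadratic in $s=\tau h$ for its positive root, which matches $\tau_o$ after writing the denominator as $2t\bigl(2b(t^2+6)+t^4\bigr)$. The branch-selection worries you raise are handled in the paper by the definition of $k_h$ as the solution branch nearest $k$ (so $\delta=0$ is automatically the selected root once $a_0=0$), and the paper's only explicit verification is the positivity of $2b(t^2+6)+t^4$ via the Taylor bound you would need for your item (i).
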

\begin{proof} It is easy to check that, for $0<t\le\pi$,
\eqn{2b(t^2+6)+t^4>t^4\big(2(-1/4!+t^2/6!-t^4/8!)(t^2+6)+1\big)>0.} 
Then the proof follows by solving $a_0=0$ for $s$.
\end{proof}
\begin{remark}\label{rpf}
{\rm (a)} Numerical tests in the next section show that the HDG method with $\tau=\tau_o$ is pollution-free for 1D problems. The rigorous analysis will be considered in a future work. 

{\rm (b)} Such a parameter may also be applied to reduce the pollution errors of the linear HDG method for Helmholtz problems in higher dimensions on Cartesian meshes. Based on the work \cite{Babuvska:lvoM:Sauter:2000}, the pollution error of the linear HDG method in higher dimensions can not be eliminated completely by tuning the penalty parameter, since there are infinitely many directions of wave propagations. Clearly, for problems higher dimensions, it is unnecessary to use the exact (but complicated) $\tau_o$, whose proper approximations  be sufficient. By some simple calculations, we have
\eqn{\tau_o=k\big(1+t/15+O(t^2)\big).} 
\end{remark}

The following lemma gives leading orders of the phase errors of the linear HDG method with $\tau=k, k(1+\frac{t}{15}), \mathbf{i} k$, and $\frac{\mathbf{i}}{h}$ (used in \cite{Chen:Lu:Xu:2012}), respectively.
\begin{lemma}\label{lem:phaseerror} We have
\eqn{k_h-k=\begin{cases}
\big(\frac{1}{888} + \frac{\mathbf{i}}{148}\big)k^3h^2(1+O(t^2))&\text{if } \tau=\mathbf{i}\frac{1}{h};\\
\frac{\mathbf{i}}{72}k^4h^3(1+O(t))&\text{if } \tau=\mathbf{i}k;\\
-\frac{1}{1080}k^5h^4(1+O(t^2))&\text{if } \tau=k;\\
-\frac{1}{32400}k^6h^5(1+O(t))&\text{if } \tau=k(1+\frac{t}{15}).
\end{cases}}
\end{lemma}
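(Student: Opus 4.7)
The plan is to apply the closed-form formula \eqref{th2} and Taylor-expand in $t = kh \to 0$ for each of the four choices of $s = \tau h$. Because $a_2 = 4 + s + 2\cos t$ stays bounded away from $0$ and $a_0 \to 0$ as $t \to 0$ in each case, the branch chosen in \eqref{th2} --- the one with $\delta \to 0$ --- satisfies
\[
\delta \;=\; -\frac{a_0}{a_1} \;+\; O\!\Big(\frac{a_0^2 a_2}{a_1^3}\Big),
\]
and $t_h = t\sqrt{1 + \delta/t^2} = t + \delta/(2t) + O(\delta^2/t^3)$ gives $k_h - k = \delta/(2 t h) + O(\delta^2/(t^3 h))$. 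Each case therefore reduces to identifying the leading orders of $a_0$ and $a_1$ and feeding them into this formula.

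From $\cos t = 1 - t^2/2 + t^4/24 - t^6/720 + O(t^8)$ and $b = -t^4/24 + t^6/720 + O(t^8)$ one obtains $2b(t^2+6) + t^4 = \tfrac12 t^4 - \tfrac{1}{15}t^6 + O(t^8)$ (using $12b + t^4 = \tfrac12 t^4 + O(t^6)$) and, crucially, $4b(t^2+18) + 3t^4 = -\tfrac{1}{15}t^6 + O(t^8)$ (using the cancellation $72b + 3t^4 = O(t^6)$); collecting the $\cos t$ terms also yields $a_1 = -36s - 6s^2 + 6t^2 + 4st^2 + s^2 t^2 - \tfrac32 t^4 - \tfrac16 st^4 + O(t^6)$. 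For $\tau = \mathbf{i}/h$ ($s = \mathbf{i}$), the leading contribution to $a_0$ is $(t^4/2)(-1-t^2) \sim -t^4/2$ while $a_1 = 6 - 36\mathbf{i} + O(t^2)$, so $\delta = t^4/(12(1-6\mathbf{i})) + O(t^6)$ and rationalizing the denominator produces the stated $\tfrac{1}{888} + \tfrac{\mathbf{i}}{148}$. For $\tau = \mathbf{i}k$ ($s = \mathbf{i}t$), $s^2 - t^2 = -2t^2$ forces $a_0 = -t^6 + O(t^7)$ and $a_1 = -36\mathbf{i}t + O(t^2)$, hence $\delta = \mathbf{i}t^5/36 + O(t^6)$ and $k_h - k = \mathbf{i}k^4h^3/72\,(1 + O(t))$. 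For $\tau = k$ ($s = t$), the first term of $a_0$ vanishes identically and $(4b(t^2+18) + 3t^4)t = -t^7/15 + O(t^9)$; with $a_1 = -36t + O(t^3)$ this yields $\delta = -t^6/540 + O(t^8)$, whence $k_h - k = -k^5 h^4/1080\,(1 + O(t^2))$.

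The main obstacle is case~4, where $\tau = k(1 + t/15)$ (equivalently $s = t + t^2/15$) is tuned precisely so that the leading $-t^7/15$ in $(4b(t^2+18) + 3t^4)s$ is cancelled by the leading $t^7/15$ in $(2b(t^2+6) + t^4)(s^2 - t^2) \approx (t^4/2)(2t^3/15)$. To extract the actual leading order of $a_0$ one must carry the expansions $2b(t^2+6) + t^4 = t^4/2 - t^6/15 + O(t^8)$, $s^2 - t^2 = 2t^3/15 + t^4/225$, and $4b(t^2+18) + 3t^4 = -t^6/15 + O(t^8)$ one order beyond the cancelling terms; multiplying out and collecting then gives $a_0 = t^8/450 - t^8/225 + O(t^9) = -t^8/450 + O(t^9)$. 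Dividing by $a_1 = -36t + O(t^2)$ and by $2th$ produces $\delta = -t^7/16200 + O(t^8)$ and hence $k_h - k = -k^6 h^5/32400\,(1 + O(t))$. The delicacy of the proof lies entirely in this bookkeeping, which hinges on the precise coefficient $1/15$ in the correction to $\tau$ and on expanding $\cos t$ to order $t^6$; the other three cases reduce to routine substitution.
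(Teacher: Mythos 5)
Your proposal follows exactly the route the paper itself sketches (Remark-level only): approximate the small root of \eqref{kh} by $\delta\approx -a_0/a_1$, substitute $s=\mathbf{i},\mathbf{i}t,t,t+t^2/15$, and Taylor-expand; the paper explicitly omits the technical details that you supply. I checked your expansions (in particular $2b(t^2+6)+t^4=\tfrac12 t^4-\tfrac1{15}t^6+O(t^8)$, $4b(t^2+18)+3t^4=-\tfrac1{15}t^6+O(t^8)$, and the case-4 cancellation giving $a_0=-t^8/450+O(t^9)$) and all four leading coefficients come out as stated, so the proposal is correct and essentially identical in approach.
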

\begin{proof} We only describe the ideas of the proof but omit the technical details.  Noting that $\de^2$ doubles the order of the infinitesimal $\de$, the solution to $a_1\de+a_0=0$ is a good approximation to that of \eqref{kh}, and hence from \eqref{th2}, $t_h^2-t^2\approx -a_0/a_1$. Then the results are obtained by substituting $s=\mathbf{i}, \mathbf{i}t, t, $ and $t+\frac{t^2}{15}$ into $-a_0/a_1$ and doing Taylor expansions, respectively. 
\end{proof}
\begin{remark}\label{rpe}
{\rm (a)} It is easy to show that $|k_h-k|=O(k^3h^2)$ if $\tau=O(1/h)$, which is of the same order as the pollution error proved in \cite{Chen:Lu:Xu:2012}.

{\rm (b)} It is also holds that $|k_h-k|=O(k^4h^3)$ if $\tau\eqsim k$, which matches the pollution errors in the estimates given in Theorem~\ref{thm:erresti}.

{\rm (c)} If we set $\bq=\nabla u$ instead of $\bq=-\nabla u$ in \eqref{eq33} to formulate the HDG method, the conditions for the 3rd and 4th estimates should be replaced by $\tau=-k$ and $\tau=k(-1+\frac{t}{15})$, respectively.
\end{remark}

\subsection{Two dimensional analysis on equilateral triangulations} By following the above procedure for 1D case but with a little more complicated calculations, we may define the discrete wave number $k_h$ of the HDG method on equilateral triangulations and obtain the following lemma which gives leading orders of the phase errors of the linear HDG method with $\tau=k, \mathbf{i} k, \frac{\sqrt{2}}{2}k\big(1+\frac{\sqrt{3}}{64}kh\big)$, and $\frac{\mathbf{i}}{h}$ (used in \cite{Chen:Lu:Xu:2012}), respectively. The details are omitted.
\begin{lemma}\label{lem:phaseerror2d} We have
\eqn{|k_h-k|\begin{cases}
=\frac{\sqrt{149961}}{129696}k^3h^2(1+O(t^2))&\text{if } \tau=\mathbf{i}\frac{1}{h};\\
=\frac{\sqrt{3}}{384}k^4h^3(1+O(t))&\text{if } \tau=\mathbf{i}k;\\
=\frac{\sqrt{3}}{1152}k^4h^3(1+O(t))&\text{if } \tau=k;\\
\le \frac{1}{46080}k^5h^4(1+O(t))&\text{if } \tau=\frac{\sqrt{2}}{2}k\big(1+\frac{\sqrt{3}}{64}kh\big).
\end{cases}}
\end{lemma}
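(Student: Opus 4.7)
The plan is to carry out the 2D analogue of the 1D dispersion argument of Section~\ref{sec:Dispersion}, exploiting the 6-fold symmetry of the equilateral triangulation. First I would fix an infinite equilateral mesh of $\mathbb{R}^2$ whose edges lie in three directions and impose the plane-wave Bloch ansatz $\hat u_h = I_h e^{\mathbf{i} k_h \bn_\theta\cdot x}$ on the edge traces, where $\bn_\theta = (\cos\theta,\sin\theta)$. The local equations \eqref{eq39}--\eqref{eq311} can then be solved on a single reference triangle to express the nine scalar DOFs of $u_h$ and $\bq_h$ on that triangle in terms of the three edge-trace data, exactly as the passage from \eqref{eq39}--\eqref{eq311} to \eqref{HDGhuh} was carried out on each interval in 1D.

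Second, I would impose the transmission/flux conditions \eqref{eq312} across a minimal set of representative interior edges and use the $\pi/3$-rotational and translational invariance of the lattice to reduce the resulting system to a single scalar dispersion relation $F_h(t_h,t,s,\theta)=0$, with $t=kh$, $t_h=k_h h$, and $s=\tau h$. Writing $\delta = t_h^2-t^2$ and linearising gives the quadratic $a_2\delta^2+a_1\delta+a_0=0$ in analogy with \eqref{kh}, now with coefficients $a_j(s,t,\theta)$ that are trigonometric polynomials in both $t$ and $\theta$.

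Third, for each of the four candidate parameters $\tau\in\{\mathbf{i}/h,\ \mathbf{i}k,\ k,\ \tfrac{\sqrt{2}}{2}k(1+\tfrac{\sqrt{3}}{64}kh)\}$ I would substitute the corresponding $s$, Taylor-expand $\delta \approx -a_0/a_1$ in $t$, and read off the leading term. For the first three choices the $\theta$-dependent contributions cancel at leading order, producing a direction-independent phase error and hence the stated equalities. For the fourth choice, the pair $(\tfrac{\sqrt{2}}{2}k,\ \tfrac{\sqrt{6}}{128}k^2 h)$ is tuned precisely so that the leading $O(k^4 h^3)$ term vanishes identically in $\theta$; the remaining contribution is $O(k^5 h^4)$ with a direction-dependent coefficient that I would bound uniformly in $\theta\in[0,2\pi)$, yielding the asserted inequality.

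The main obstacle will be algebraic rather than conceptual. Eliminating the nine local DOFs per triangle to reach the dispersion relation involves inverting a $9\times 9$ matrix whose entries are polynomials in $s$, $t$, and trigonometric functions of $\theta$, and Taylor expansion of $-a_0/a_1$ up to order $k^5 h^4$ produces long expressions whose reduction to the stated constants (for example verifying that the constant for $\tau=\mathbf{i}/h$ is indeed $\sqrt{149961}/129696$, and that the $\tfrac{\sqrt{3}}{64}$ correction in the last case really annihilates the $k^4 h^3$ term uniformly in $\theta$) will in practice require symbolic computation. A secondary technical point is the extremisation in $\theta$ needed to turn the $\theta$-dependent residual of the fourth case into the uniform upper bound $\tfrac{1}{46080}k^5 h^4(1+O(t))$.
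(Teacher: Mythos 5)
Your plan is essentially the paper's own approach: the paper gives no proof of this lemma beyond the remark that one ``follows the above procedure for the 1D case'' on the equilateral lattice and omits all details, and your outline (Bloch plane-wave ansatz on the edge traces, local elimination of $(u_h,\bq_h)$ per triangle, reduction by lattice symmetry to a dispersion relation, expansion of $\delta\approx -a_0/a_1$ for each choice of $s=\tau h$, and a sup over the propagation angle $\theta$ for the final inequality) is exactly that procedure spelled out. The stated constants can only be confirmed by the symbolic computation you defer to, but that is equally true of the paper's presentation.
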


Note that the phase errors for $\tau=k, \mathbf{i} k$ and $\frac{\mathbf{i}}{h}$ coincide with the pollution errors $O(k^4h^3)$ in  
Theorem~\ref{thm:erresti} and $O(k^3h^2)$ in \cite[Theorem 6.1]{Chen:Lu:Xu:2012}, respectively. We remark that, for any constant $a$, choosing $\tau=\frac{\sqrt{2}}{2}k\big(1+a kh\big)$ may improves the phase error to $O(k^5h^4)$, and the optimal value of $a$ is $\frac{\sqrt{3}}{64}$. Such a $\tau$ is also expected to reduce further the pollution error, which will be verified numerically in the next section.

\section{Numerical examples}
In this section, we present two numerical examples to verify our error estimates for the linear HDG method and examine the influence of the penalty parameter $\tau$ on the pollution errors. Denote the relative $L^2$ errors of $u_h$, $\bq_h$, and theirs interpolations by 
\eqn{e_u:=\frac{\Vert u-u_h\Vert}{\Vert u\Vert},\;e_\bq:=\frac{\Vert \bq-\bq_h\Vert}{\Vert \bq\Vert}, \;e_u^I:=\frac{\Vert u-I_hu\Vert}{\Vert u\Vert},\;\text{ and } e_\bq^I:=\frac{\Vert \bq-I_h\bq\Vert}{\Vert \bq\Vert}.}

\begin{example}\label{ex1} An 1D Helmholtz problem \eqref{eq31}--\eqref{eq32} with $\Om=(0,1)$, $f=0$, and $g(0)=g(1)=1$. The exact solution is given by
\eqn{u=\frac{1}{2\mathbf{i}k}\big(e^{-\mathbf{i} kx}+e^{\mathbf{i} k(x-1)}\big).}
\end{example}

First we test the relative $L^2$ errors of $u_h$ and $\bq_h$ for fixed wave number $k=10, 100,$ and $200$, respectively, and $h=1/2,1/3,\cdots, 1/10000$. Figure~\ref{F1} plots results for $\tau=\mathbf{i}/h$. All the interpolation errors decay at the full rate of $O(h^2)$ for $h$ less than around half wavelength and are pollution-free. The relative $L^2$ error of $u_h$ is almost the same as the  that of the interpolant $I_hu$ for the small wave number $k=10$. For larger wave number $k=100$, $e_u$ first stays around $100\%$ and then decays at the rate of $O(h^2)$ after a point later than that of $e_u^I$. For $k=200$ the gap between the decaying points is even larger. Similar results hold for $\bq_h$ (see Figure~\ref{F1}(right)) except the asymptotic convergence rates of $e_\bq$ are $O(h)$ which is not full. This figure clearly shows the existence of pollution effect for large wave number and verifies the following estimates given in \cite{Chen:Lu:Xu:2012} for the linear HDG method with $\tau=\mathbf{i}/h$:
\eq{\label{cxlest}
k\norm{u-u_h}_{L^2(\Om)}=O(k^2h^2+k^3h^2),\quad \norm{\bq-\bq_h}_{L^2(\Om)}=O(kh+k^3h^2).}
\begin{figure}[htbp]
\begin{center}
\includegraphics[scale=0.5]{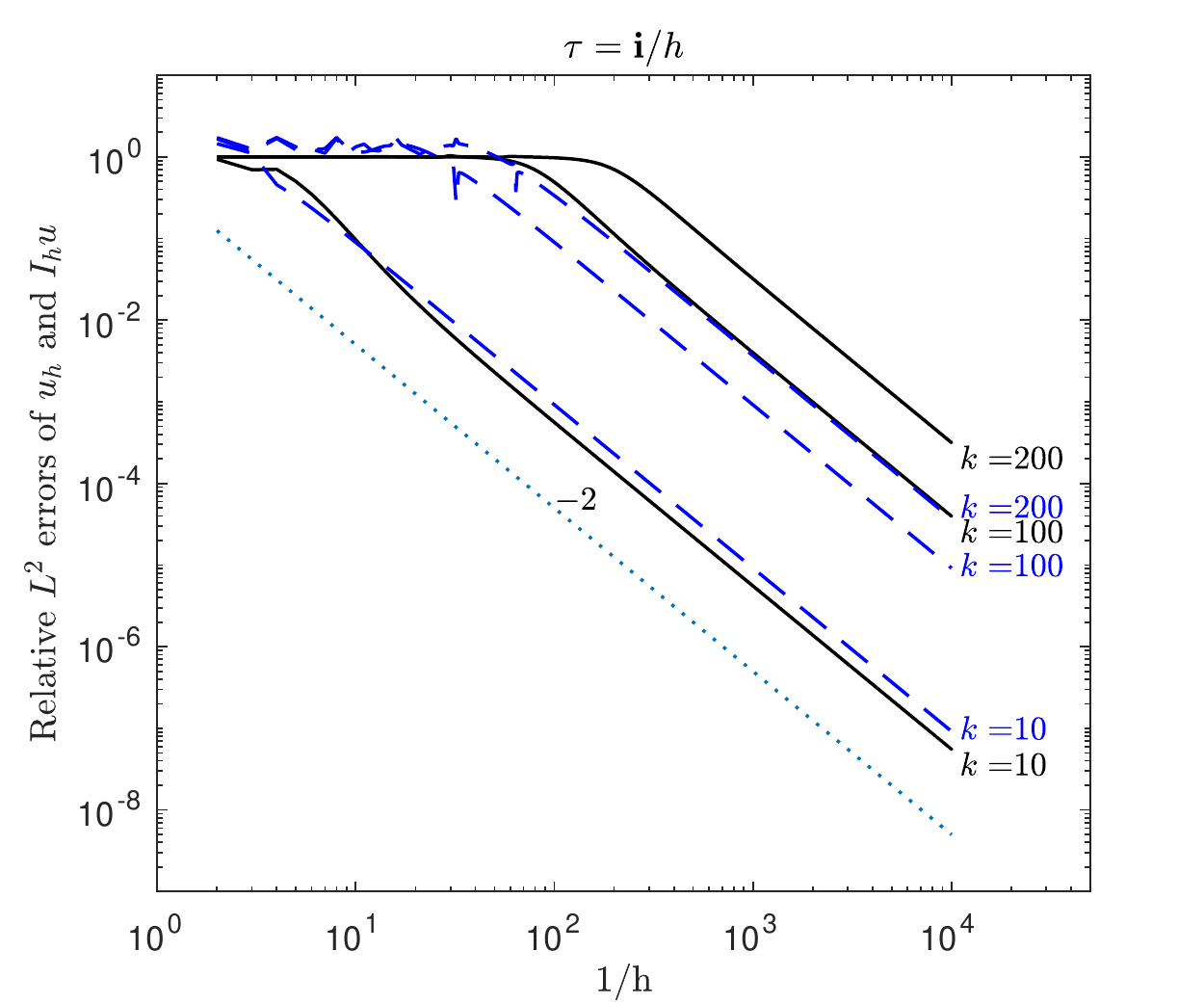}
\includegraphics[scale=0.5]{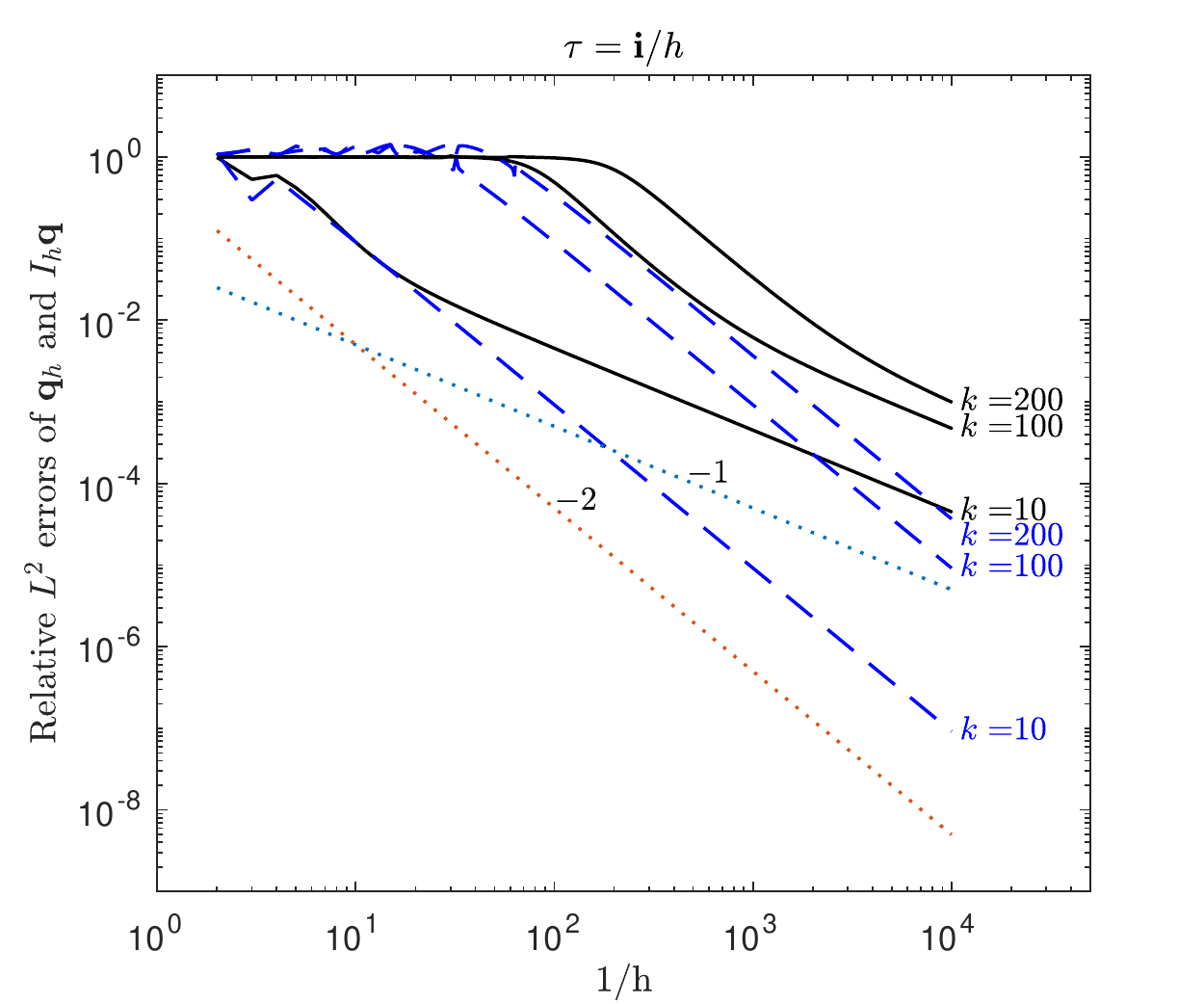}
\end{center}
\caption{Example  \ref{ex1}: $\tau=\mathbf{i}/h$. 
 Relative $L^2$ errors of $u_h$, $\bq_h$ (solid) and theirs interpolations (dashed) versus $1/h$. Dotted lines gives reference slopes. \label{F1}}
\end{figure}
We remark that the linear HDG method with $\tau=\mathbf{i}/h$ behaves much like the linear FEM (see Figure~\ref{F2}). 
\begin{figure}[htbp]
\begin{center}
\includegraphics[scale=0.5]{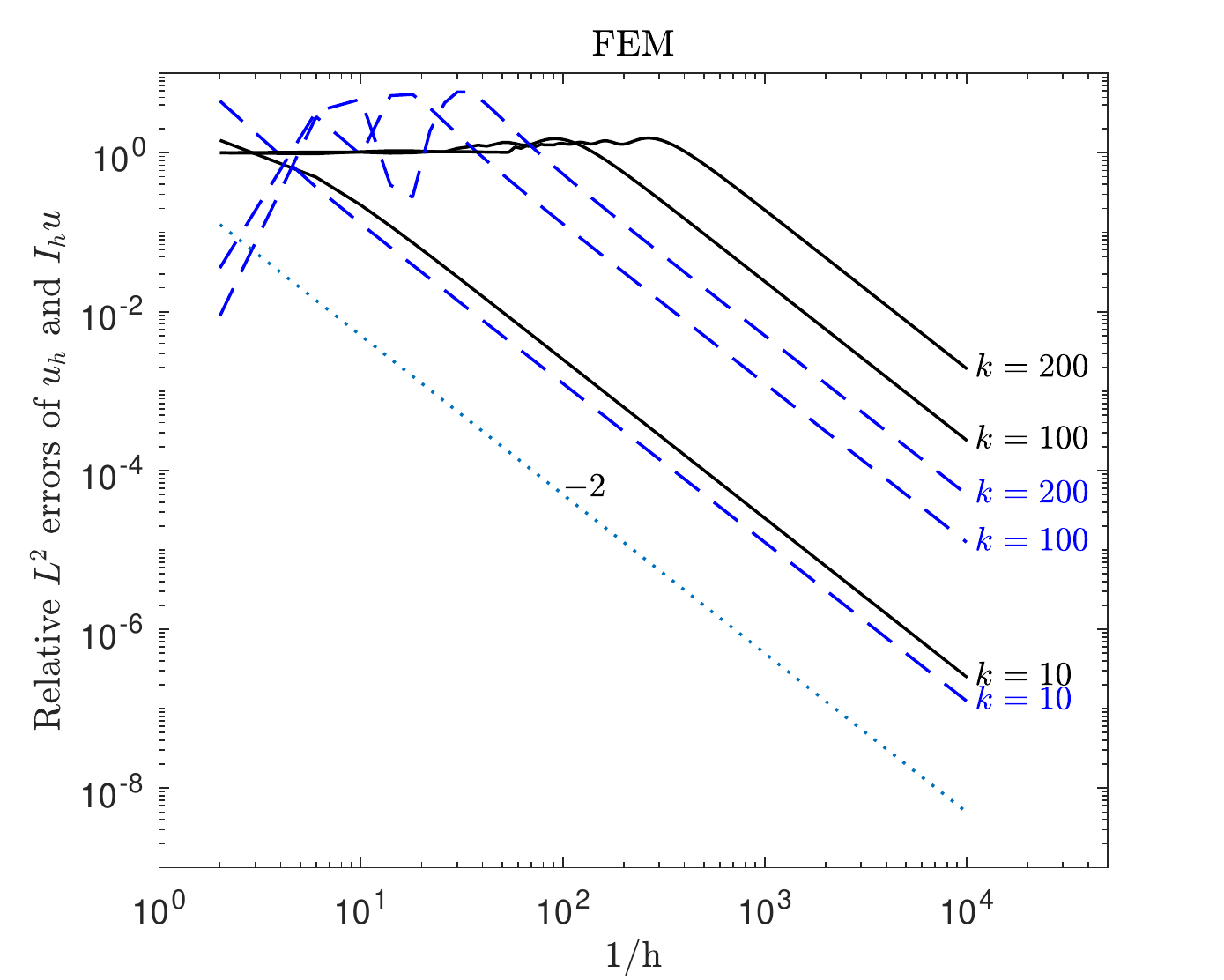}
\includegraphics[scale=0.5]{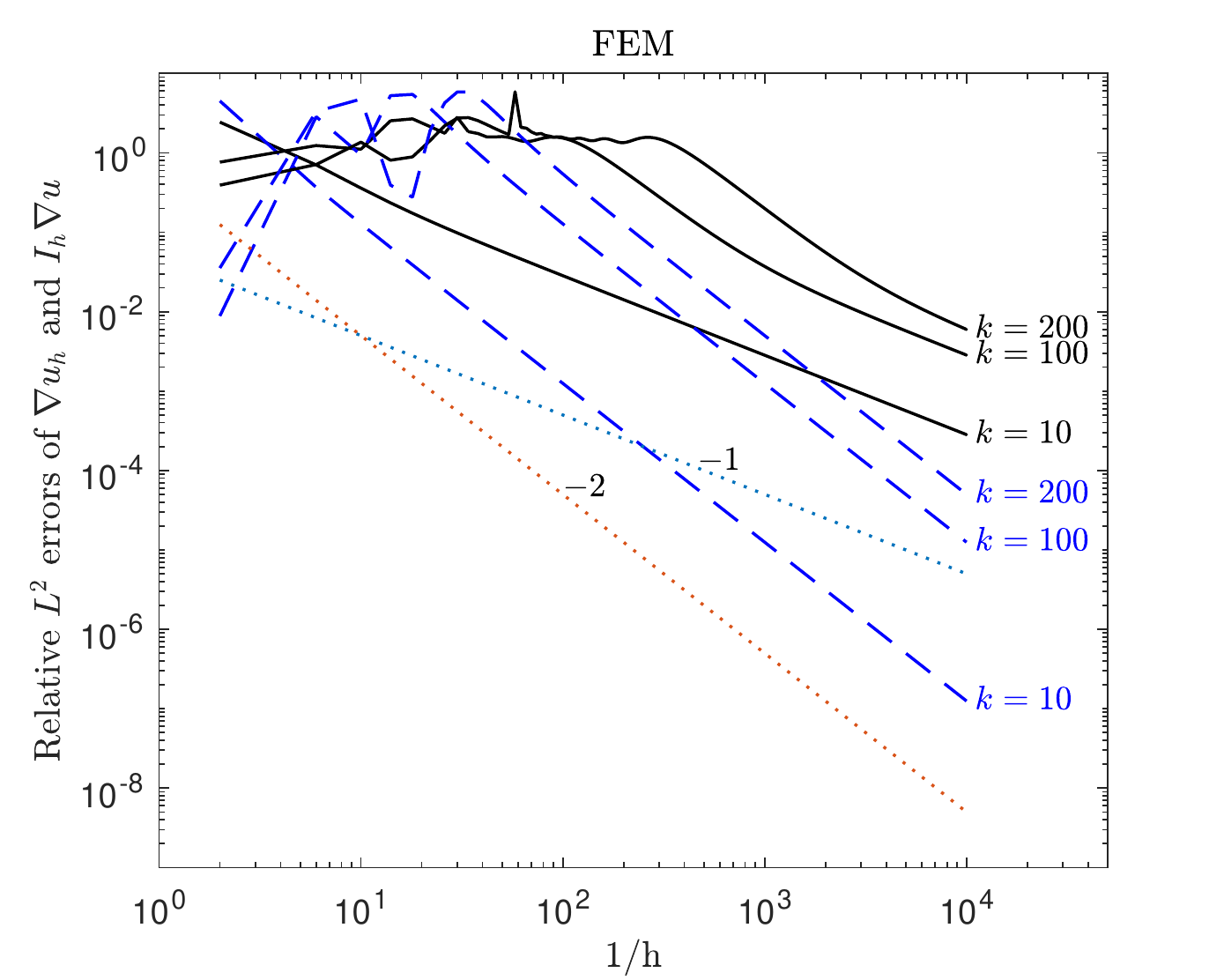}
\end{center}
\caption{Example  \ref{ex1}: FEM. 
 Relative $L^2$ errors of $u_h$, $\na u_h$ (solid) and $I_hu, \na I_hu$ (dashed) versus $1/h$. Dotted lines gives reference slopes. \label{F2}}
\end{figure}
Figure~\ref{F3} plots results for $\tau=\mathbf{i}k$, which clearly performs better than the case of $\tau=\mathbf{i}/h$. On the one hand, $e_\bq$ converges asymptotically at the full rate of $O(h^2)$ in $h$. On the other hand, both $e_u$ and $e_\bq$ decay at a rate faster than $O(h^2)$ after the decaying points and then approaches to $e_u^I$ and $e_\bq^I$, respectively, which indicates that the order of the pollution errors in $h$ is higher than that of interpolation errors, as the error estimates \eqref{eu}--\eqref{eq} show.
\begin{figure}[htbp]
\begin{center}
\includegraphics[scale=0.5]{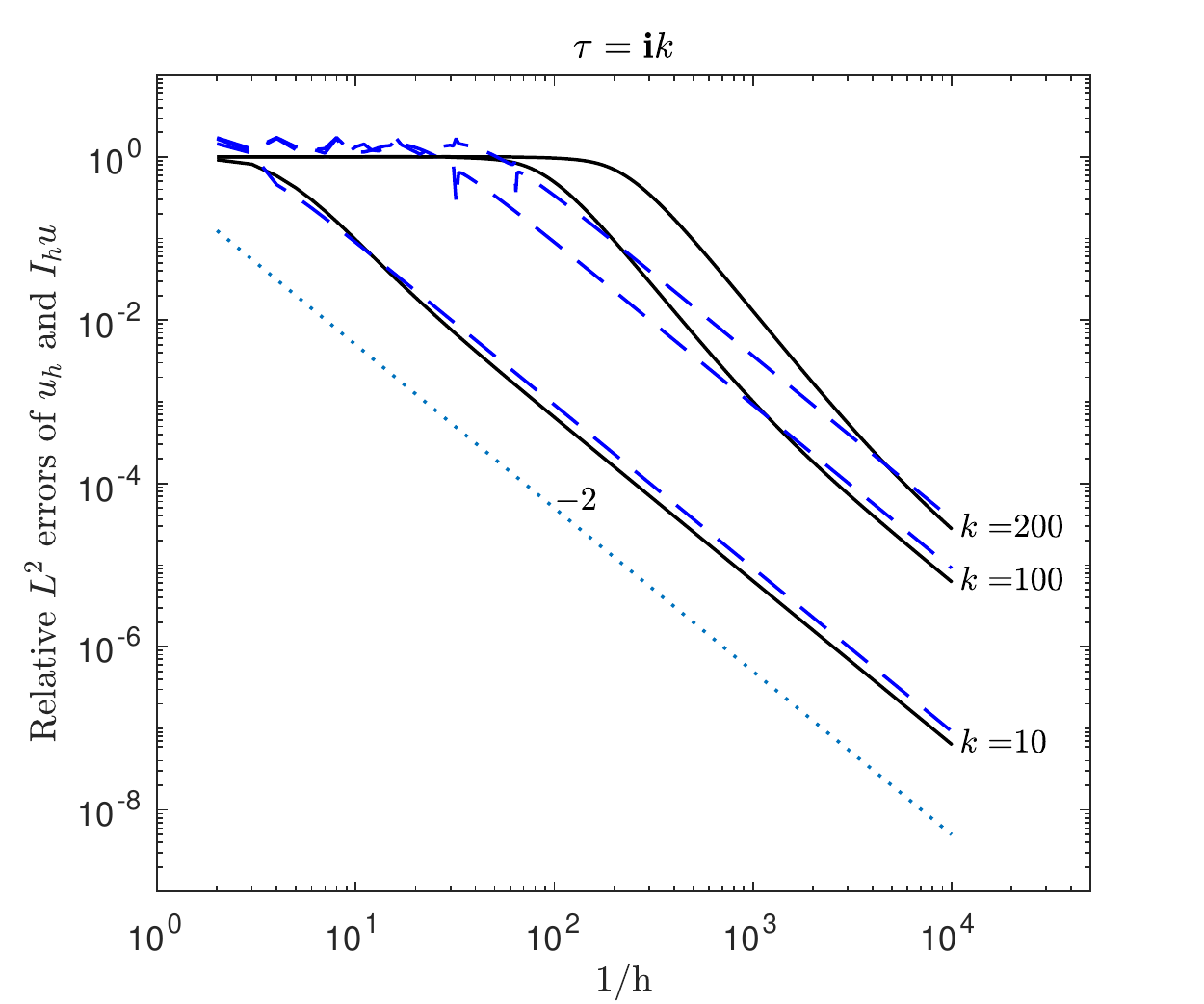}
\includegraphics[scale=0.5]{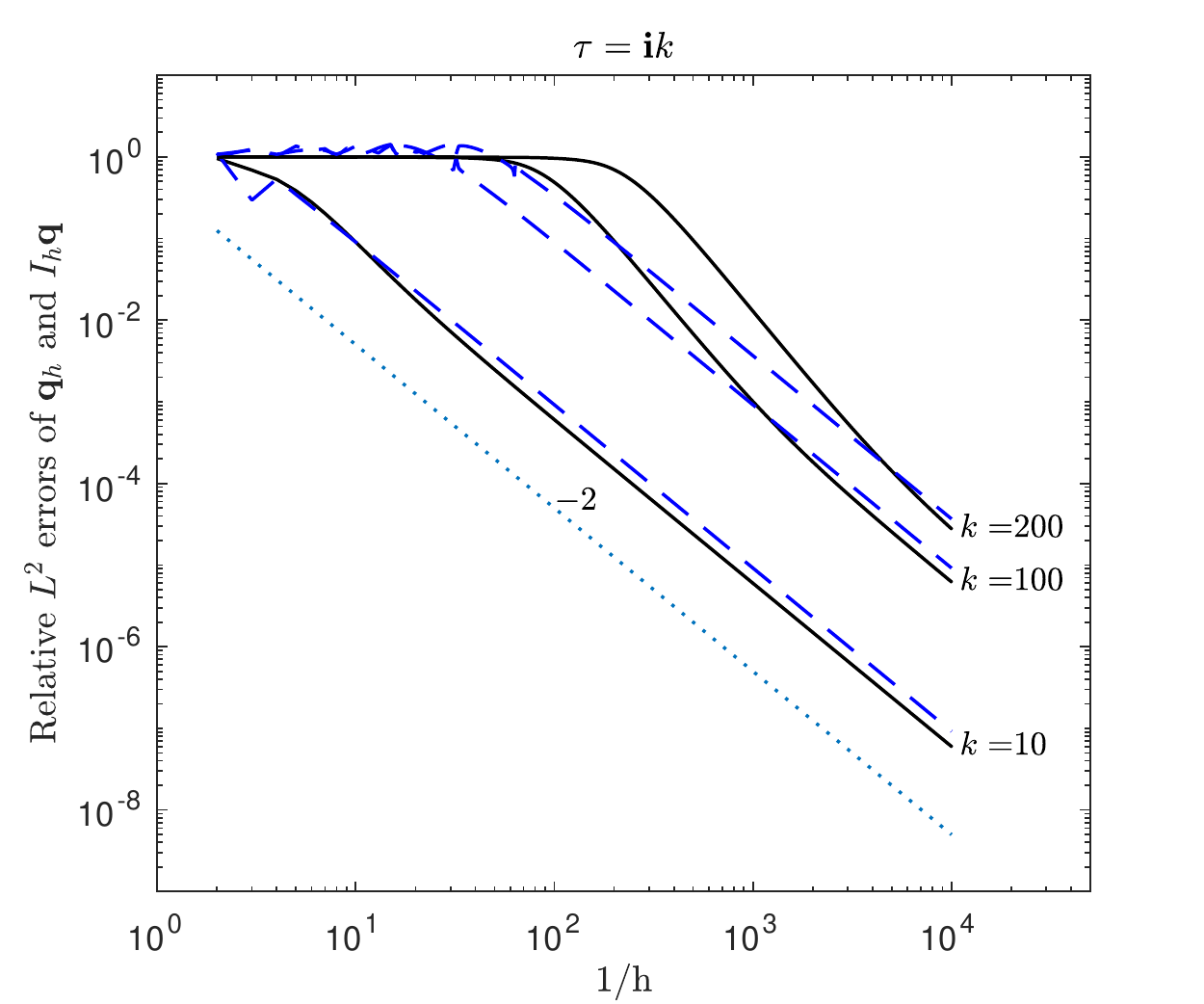}
\end{center}
\caption{Example  \ref{ex1}: $\tau=\mathbf{i} k$. 
 Relative $L^2$ errors of $u_h$, $\bq_h$ (solid) and theirs interpolations (dashed) versus $1/h$. Dotted lines gives reference slopes. \label{F3}}
\end{figure}
Figure~\ref{F4} shows that the pollution errors of the HDG methods are reduced significantly by setting $\tau=k$ and even further by setting $\tau=k(1+\frac{kh}{15})$ (see Figure~\ref{F5}), which verifies Lemma~\ref{lem:phaseerror}.
\begin{figure}[htbp]
\begin{center}
\includegraphics[scale=0.5]{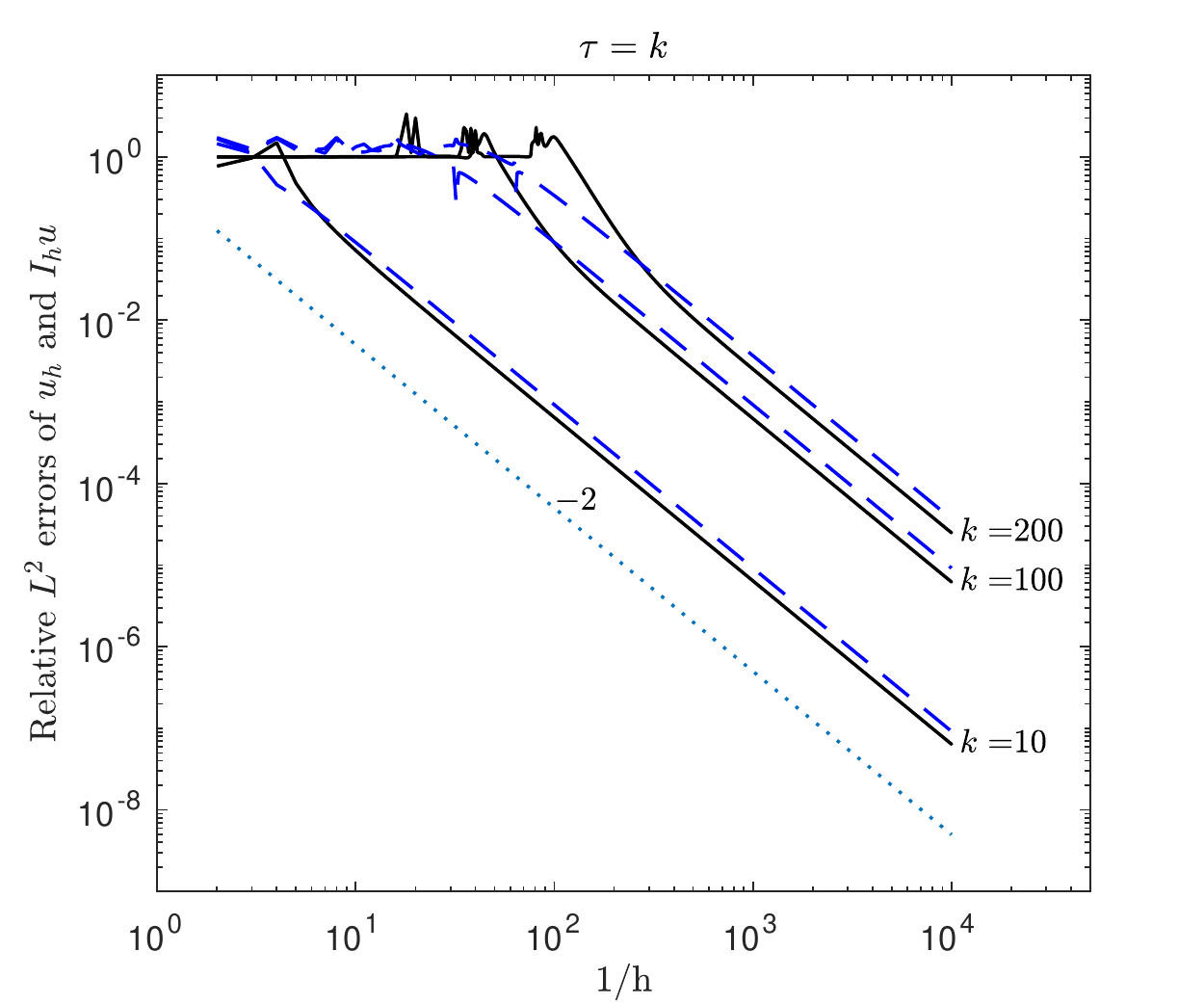}
\includegraphics[scale=0.5]{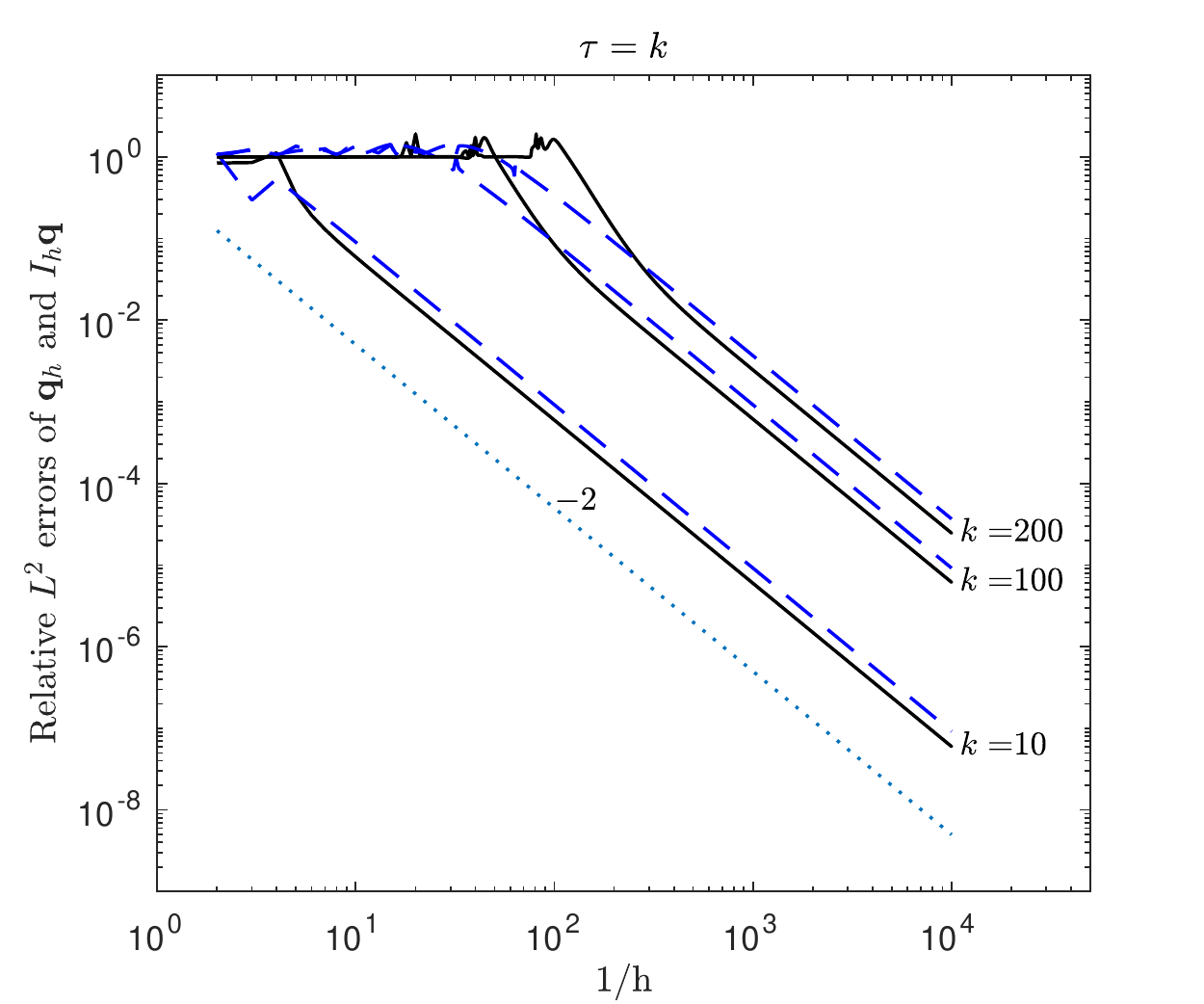}
\end{center}
\caption{Example  \ref{ex1}: $\tau=k$. 
 Relative $L^2$ errors of $u_h$, $\bq_h$ (solid) and theirs interpolations (dashed) versus $1/h$. Dotted lines gives reference slopes. \label{F4}}
\end{figure}
\begin{figure}[htbp]
\begin{center}
\includegraphics[scale=0.5]{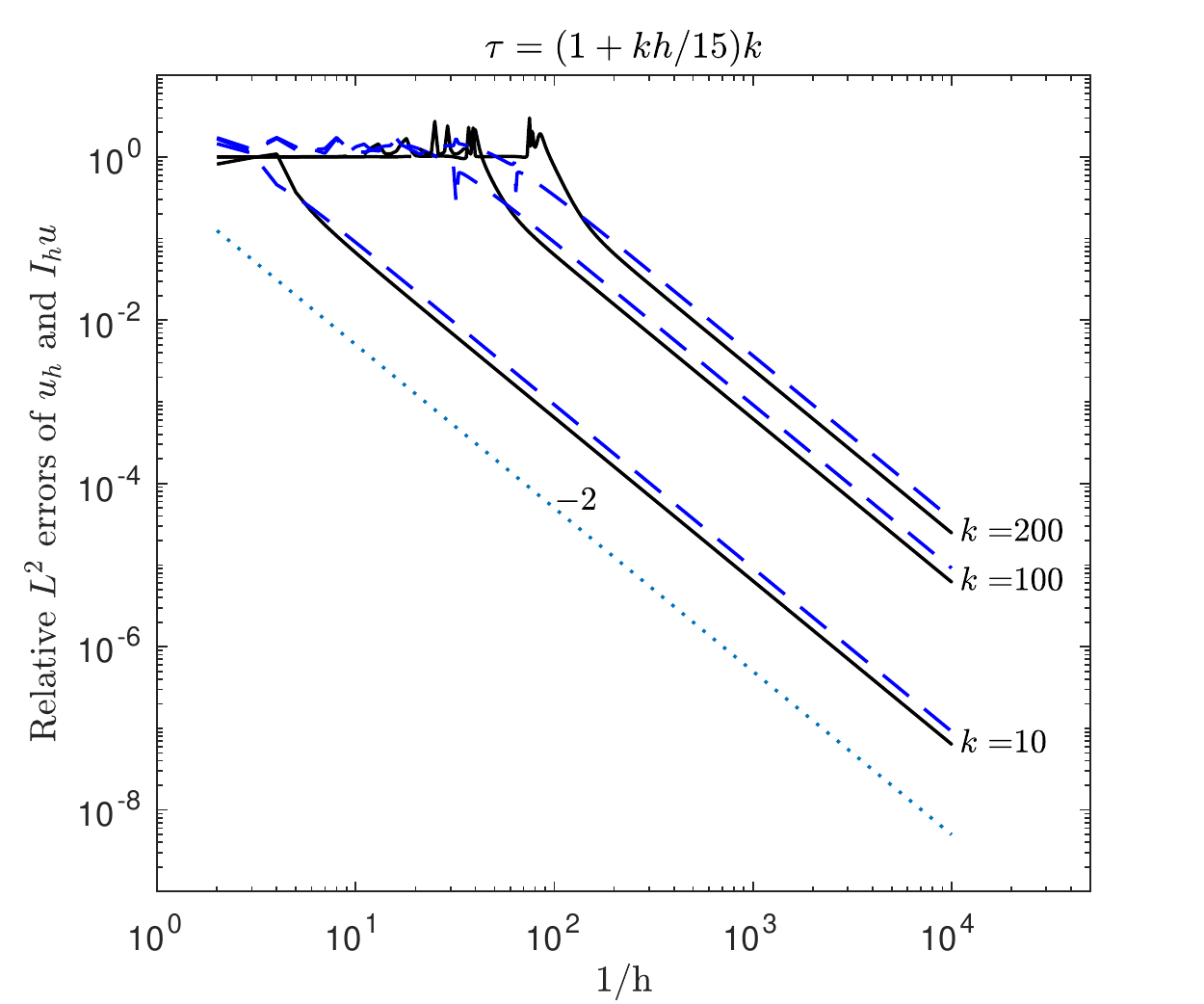}
\includegraphics[scale=0.5]{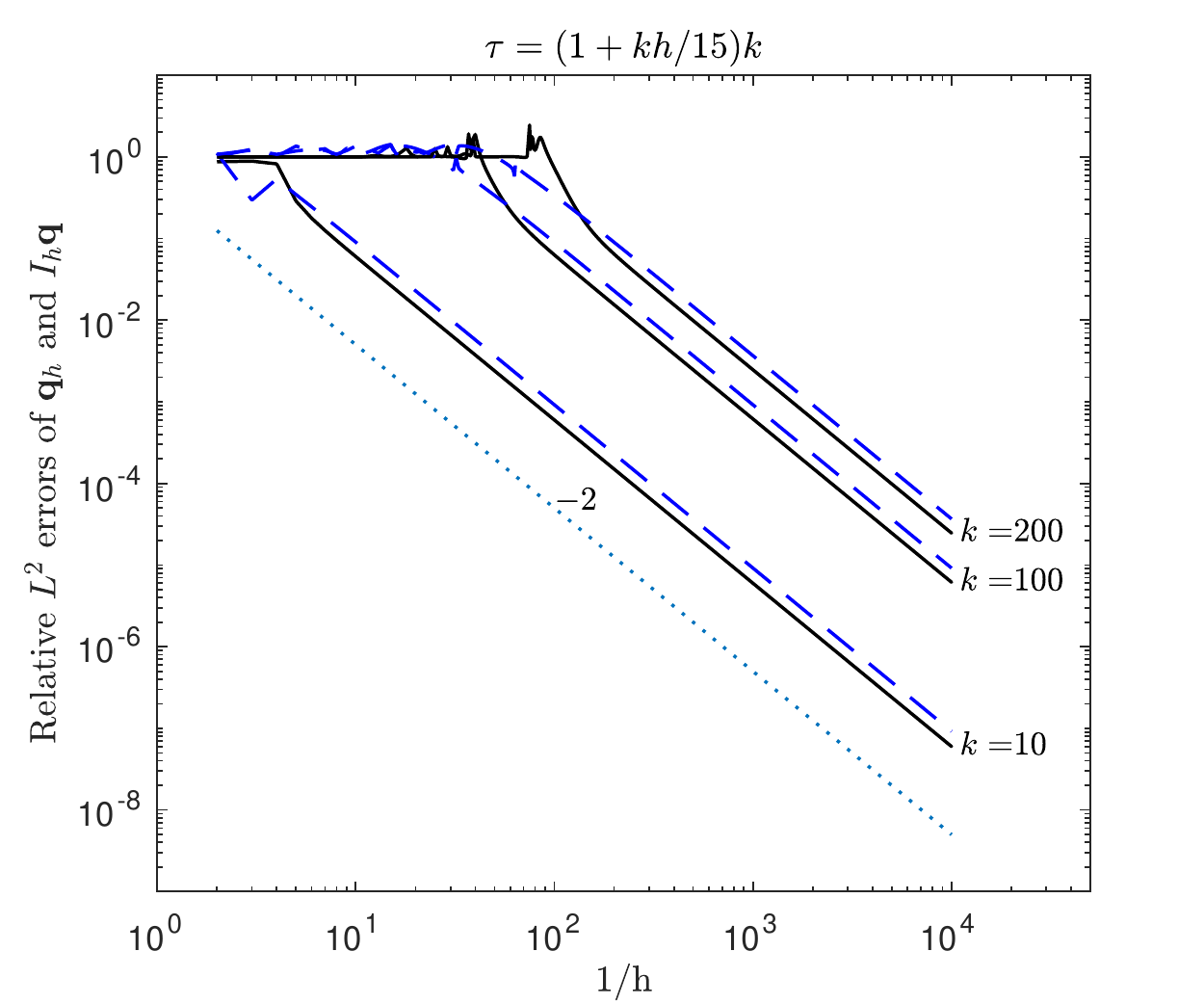}
\end{center}
\caption{Example  \ref{ex1}: $\tau=k(1+\frac{kh}{15})$. 
 Relative $L^2$ errors of $u_h$, $\bq_h$ (solid) and theirs interpolations (dashed) versus $1/h$. Dotted lines gives reference slopes.\label{F5}}
\end{figure}

Next we illustrate the pollution effects by fixing $kh=1$ and letting $k$ varies from 1 to 500. Figure~\ref{F6} plots the relative $L^2$ errors of $u_h$ (left) and $\bq_h$ (right) for $\tau=\mathbf{i}k, k$, and $k(1+\frac{kh}{15})$, respectively. The relative interpolation errors $e_u^I$ and $e_\bq^I$ (dashed lines) keep almost unchanged for $k=1,2,\cdots,500$, which agrees theirs theoretical estimates  $O(k^2h^2)$ and are pollution-free. The HDG solutions for $\tau=\mathbf{i}k$ began to show obvious pollution effect for $k$ larger than about $10$. The pollution effect is reduced significantly for $\tau=k$ and almost disappears for $\tau=k(1+\frac{kh}{15})$ and $k$ up to 500. 
\begin{figure}[htbp]
\begin{center}
\includegraphics[scale=0.5]{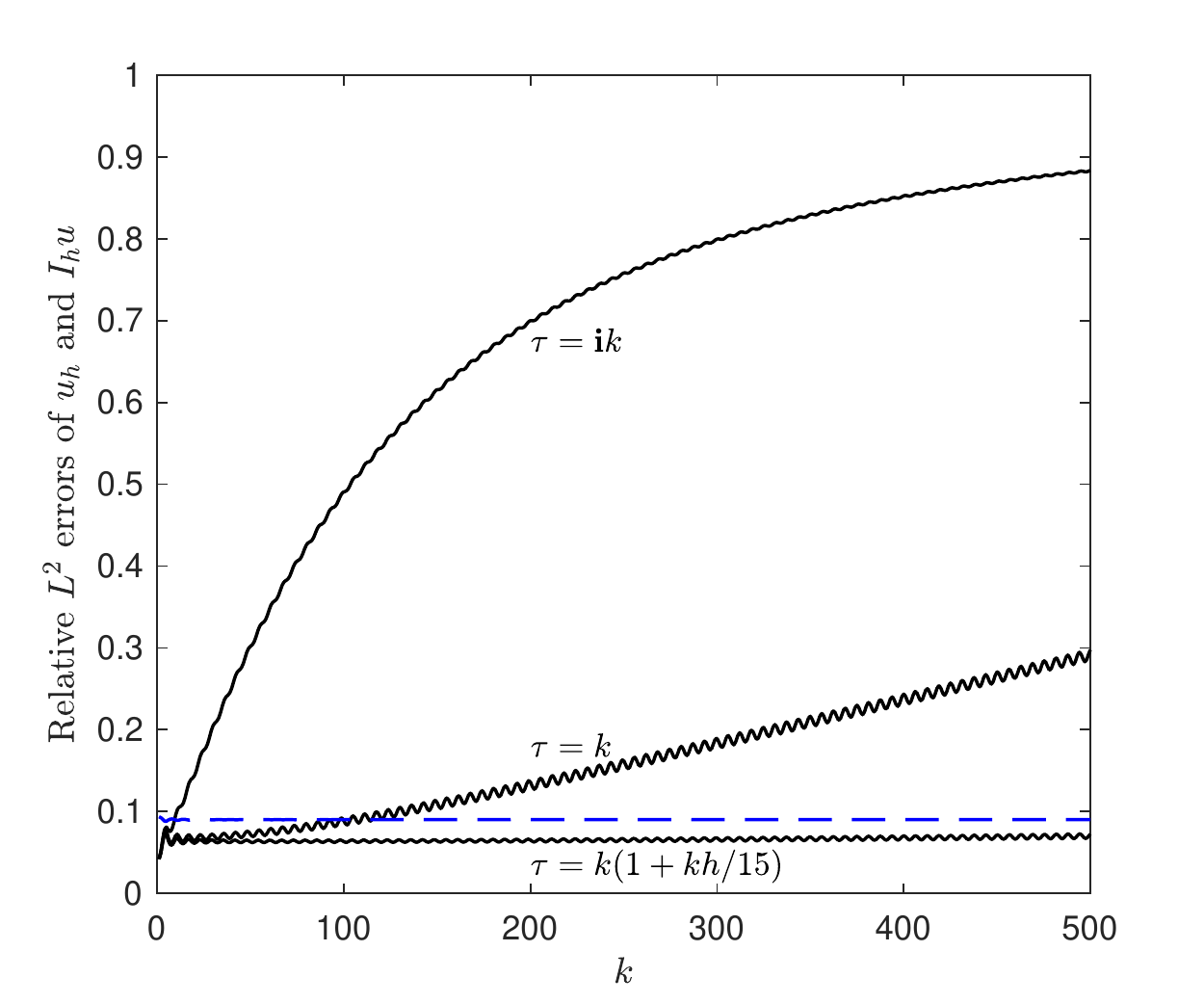}
\includegraphics[scale=0.5]{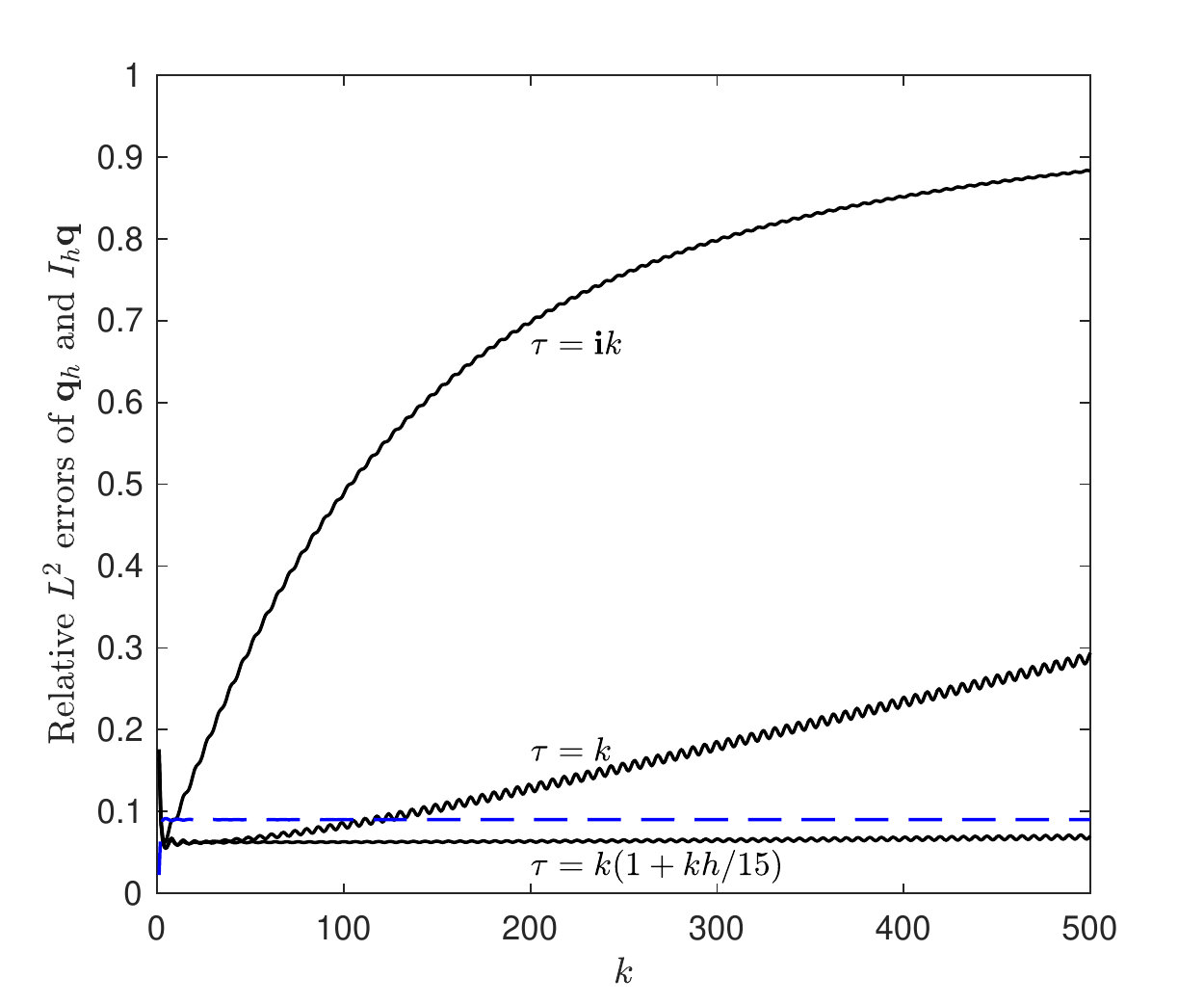}
\end{center}
\caption{Example  \ref{ex1}: $kh=1$.  
 Relative $L^2$ errors $e_u$ (left) and $e_\bq$ (right) versus $k=1,2,\cdots,500$ for $\tau=\mathbf{i}k, k$, and $k(1+\frac{kh}{15})$, respectively. The dashed lines plot the relative interpolation errors.\label{F6}}
\end{figure}

{In the following}, we verify more precisely the pollution terms in the error bounds of $u_h$ and $\bq_h$. To do so, we introduce the definition of the critical mesh sizes with respect to a given tolerance (cf. \cite[Definition 7.1]{WuHaijun:2014}.

\begin{definition}
 Given a tolerance $\ep$ and a wave number $k$, the critical mesh size $h=h(k, \ep)$ with respect to $u_h$ ( or $\bq_h$) is defined by the maximum mesh size such that the relative $L^2$ errors of $u_h$ ( or $\bq_h$) is less than or equal to $\ep$.
\end{definition}
It is clear that if the pollution term is of order $k^{m+1}h^m$ for an integer $m>1$ (see e.g. \eqref{eu},\eqref{eq}, or \eqref{cxlest}), then $h(k,\ep)$ should be proportional to $k^{-\frac{m+1}{m}}$ for $k$ large enough. Figure~\ref{F7} plots the critical mesh sizes $h(k,0.1)$ with respect to $u_h$ (left) and $\bq_h$ (right) with $\tau=\mathbf{i}/h, \mathbf{i}k, k$, and $k(1+\frac{kh}{15})$, respectively. 
It is shown that, for both $u_h$ and $\bq_h$, $h(k,0.1)=O(k^{-\frac32})$ if $\tau=\mathbf{i}/h$, $h(k,0.1)=O(k^{-\frac43})$ if $\tau=\mathbf{i}k$, $h(k,0.1)=O(k^{-\frac54})$ if $\tau=k$, and  $h(k,0.1)=O(k^{-\frac65})$ if $\tau=k(1+\frac{kh}{15})$. The first two observations verify the pollution terms in the error estimates \eqref{cxlest} and \eqref{eu}--\eqref{eq}, respectively, while the last two observations indicate that the pollution terms  should be $O(k^5h^4)$ for $\tau=k$ and $O(k^6h^5)$ for $\tau=k(1+\frac{kh}{15})$, which coincide the corresponding phase errors in Lemma~\ref{lem:phaseerror}, respectively.
\begin{figure}[htbp]
\begin{center}
\includegraphics[scale=0.5]{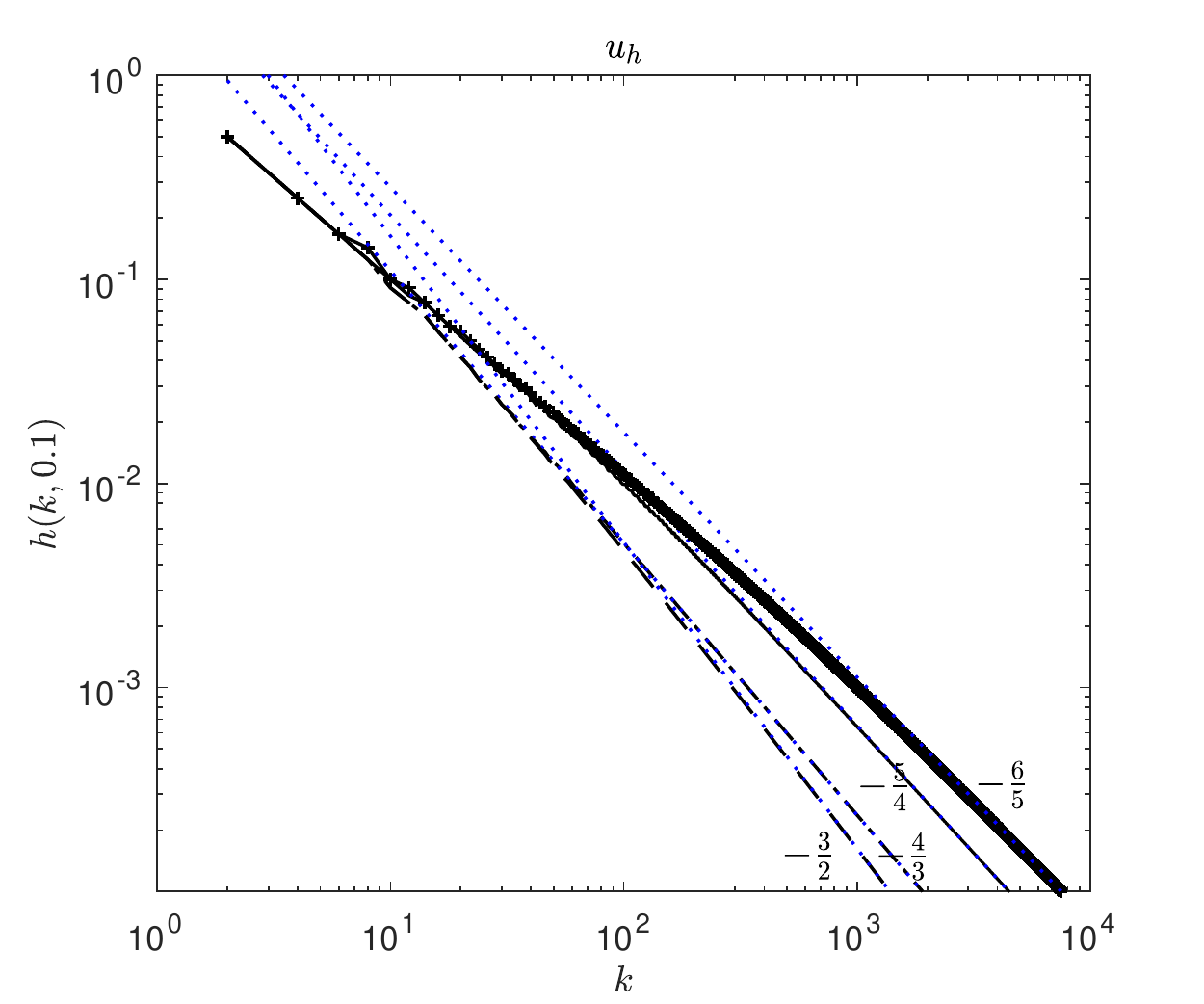}
\includegraphics[scale=0.5]{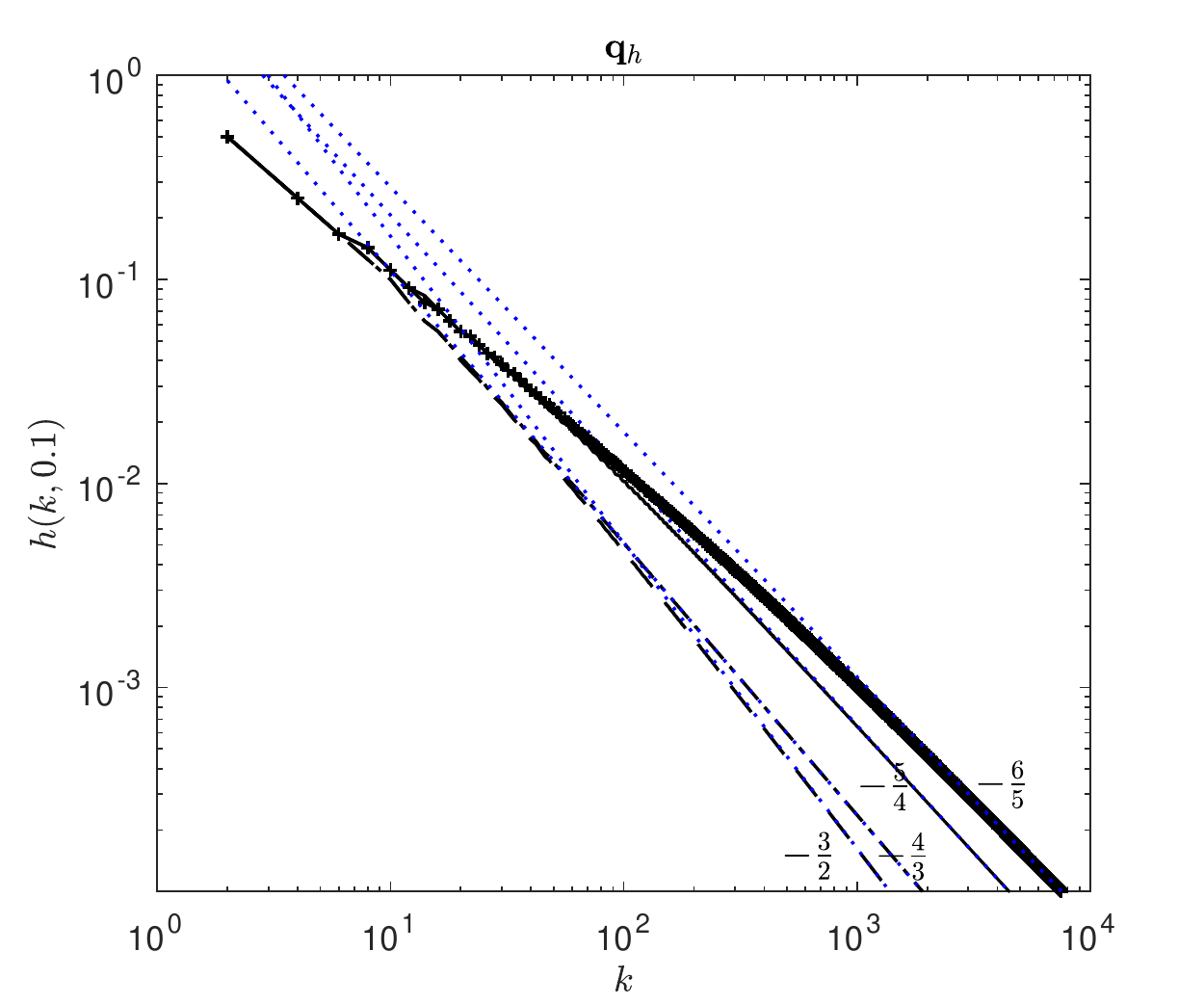}
\end{center}
\caption{Example  \ref{ex1}: Critical mesh sizes with respect to $u_h$ (left) and $\bq_h$ (right) with $\tau=\mathbf{i}/h$ (dashed), $\mathbf{i}k$ (dash dotted), $k$ (solid), and $k(1+\frac{kh}{15})$ (solid marked by $+$), respectively. The dotted lines give reference slopes. \label{F7}}
\end{figure}

\begin{example}\label{ex2} An 2D Helmholtz problem \eqref{eq31}--\eqref{eq32} with $\Om$ to be the unit hexagon centered at the origin, $f$ and $g$ is also chosen such the exact solution is given by
\eqn{
u=J_0(kr),
}
in polar coordinates, where $J_0(z)$ is Bessel function of the first kind.
\end{example}

The domain $\Om$ is triangulated into equilateral triangles of equal size. 
Figure~\ref{F8} shows the relative $L^2$ errors of the HDG solution $u_h$ and the FE solution (left),  $\bq_h$ and the gradient of the FE solution (right), and the interpolations for, $k=1,2,\cdots,500$ and $\tau=\mathbf{i}k, k$, and $\frac{\sqrt{2}}{2}k\big(1+\frac{\sqrt{3}}{64}kh\big)$, respectively. It is shown that the pollution effect of the HDG method is weaker that the FEM and is almost invisible for $\tau=\frac{\sqrt{2}}{2}k\big(1+\frac{\sqrt{3}}{64}kh\big)$, which verifies our theoretical findings in Theorem~\ref{thm:erresti} and Lemma~\ref{lem:phaseerror2d}. 
\begin{figure}[htbp]
\begin{center}
\includegraphics[scale=0.5]{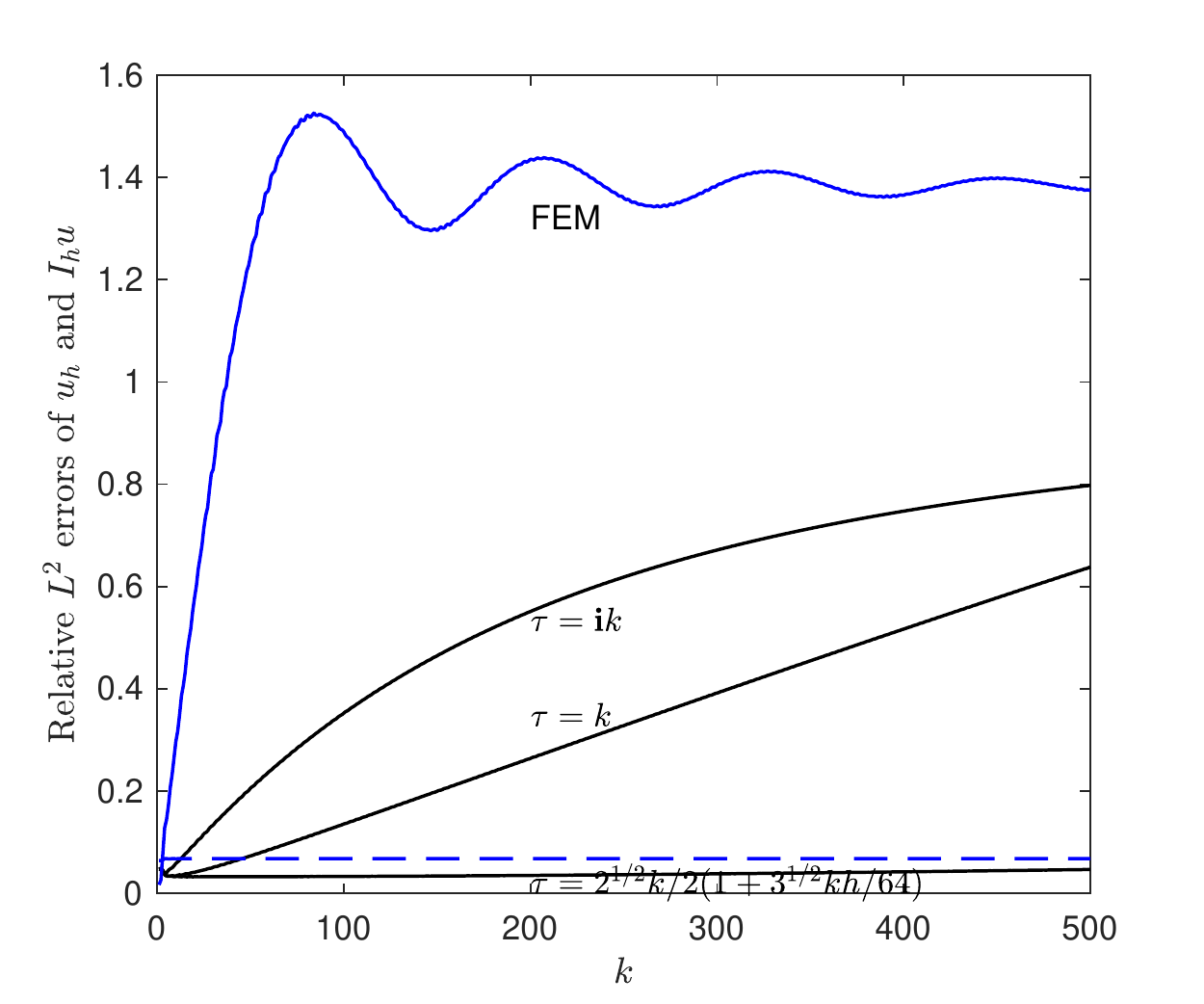}
\includegraphics[scale=0.5]{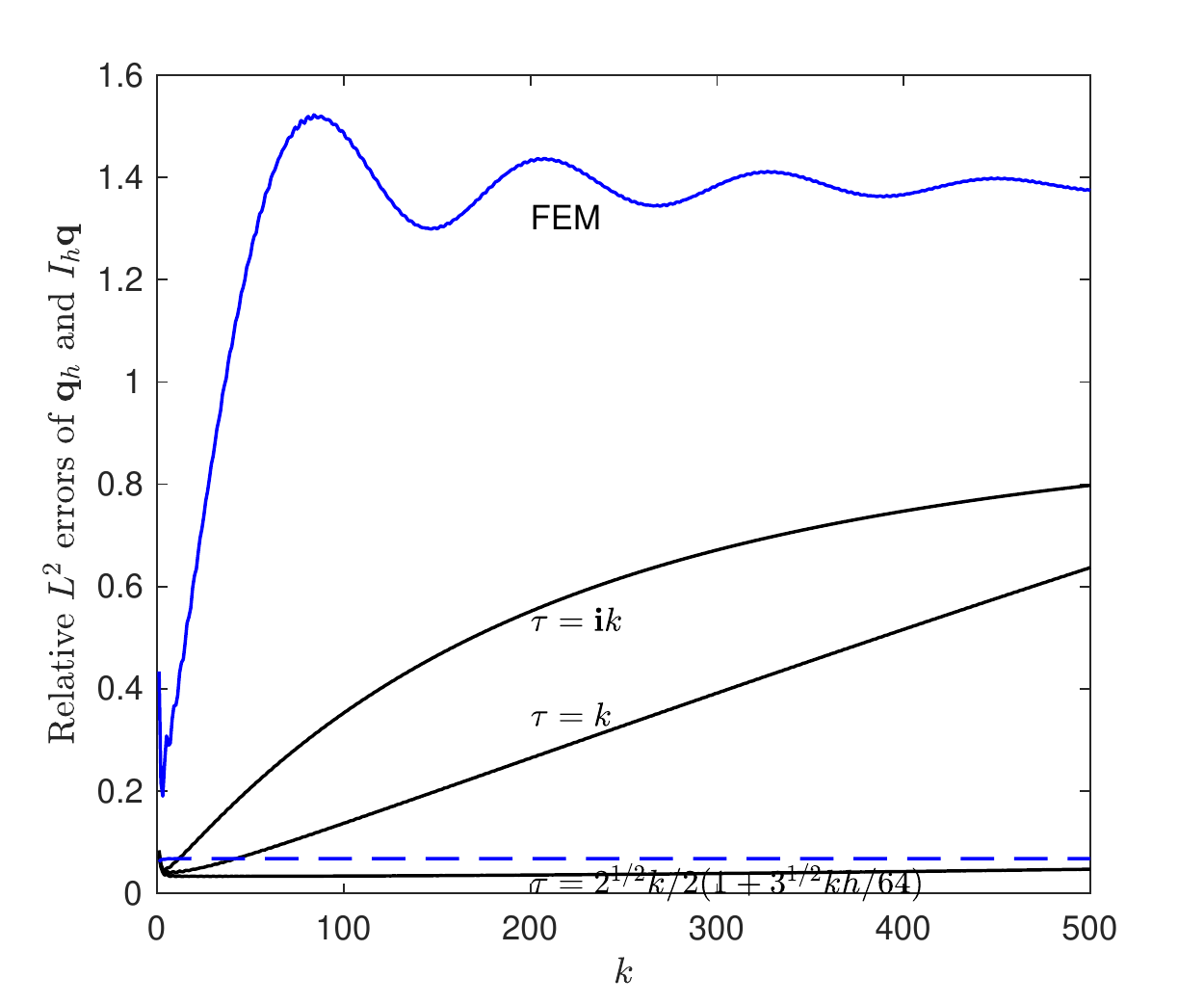}
\end{center}
\caption{Example  \ref{ex2}: $kh=1$.  
 Relative $L^2$ errors $e_u$ (left) and $e_\bq$ (right) versus $k=1,2,\cdots,500$ for $\tau=\mathbf{i}k, k$, and $\frac{\sqrt{2}}{2}k\big(1+\frac{\sqrt{3}}{64}kh\big)$, respectively. The dashed lines plot the relative interpolation errors.\label{F8}}
\end{figure}


\end{document}